\def\E{\ifmmode{\mathbb E}\else{$\mathbb E$}\fi} 
\def\N{\ifmmode{\mathbb N}\else{$\mathbb N$}\fi} 
\def\R{\ifmmode{\mathbb R}\else{$\mathbb R$}\fi} 
\def\Q{\ifmmode{\mathbb Q}\else{$\mathbb Q$}\fi} 
\def\C{\ifmmode{\mathbb C}\else{$\mathbb C$}\fi} 
\def\H{\ifmmode{\mathbb H}\else{$\mathbb H$}\fi} 
\def\Z{\ifmmode{\mathbb Z}\else{$\mathbb Z$}\fi} 
\def\P{\ifmmode{\mathbb P}\else{$\mathbb P$}\fi} 
\def\T{\ifmmode{\mathbb T}\else{$\mathbb T$}\fi} 
\def\SS{\ifmmode{\mathbb S}\else{$\mathbb S$}\fi} 
\def\DD{\ifmmode{\mathbb D}\else{$\mathbb D$}\fi} 
\newcommand{\ben}{\begin{enumerate}}
\newcommand{\een}{\end{enumerate}}
\newcommand{\be}{\begin{equation}}
\newcommand{\ee}{\end{equation}}
\newcommand{\bea}{\begin{eqnarray}}
\newcommand{\eea}{\end{eqnarray}}
\newcommand{\bc}{\begin{center}}
\newcommand{\ec}{\end{center}}
\newtheorem{thm}{Theorem}[section]
\newtheorem{cor}[thm]{Corollary}
\newtheorem{lem}[thm]{Lemma}
\newtheorem{prop}[thm]{Proposition}
\theoremstyle{definition}
\newtheorem{defn}{Definition}[section]
\theoremstyle{remark}
\newtheorem{rem}{\rm\bfseries{Remark}}[section]
\newtheorem{Conj}[defn]{Conjecture}
\newcommand{\OP}{\operatorname}
\newcommand{\CP}{\ensuremath{\C}P}
\DeclareMathAlphabet{\mathdj}{U}{msb}{m}{n}
\newcommand{\id}{\operatorname{Id}}
\begin{document}

\subjclass[2010]{Primary 53D12; Secondary 53D42}

\title[Uniqueness of extremal Lagrangian tori in the disc]{Uniqueness of extremal Lagrangian tori in the four-dimensional disc}

\author[DIMITROGLOU RIZELL]{Georgios Dimitroglou Rizell}

\thanks{The author is supported by the grant KAW 2013.0321 from the Knut and Alice Wallenberg Foundation. This work was done during a visit of the author to the Institut Mittag-Leffler (Djursholm, Sweden).}

\address{Department of Pure Mathematics and Mathematical Statistics,
Centre for Mathematical Sciences,
University of Cambridge,
Wilberforce Road,
Cambridge,
CB3 0WB, United Kingdom.}
\email{g.dimitroglou@maths.cam.ac.uk}

\begin{abstract}
The following interesting quantity was introduced by K. Cieliebak and K. Mohnke for a Lagrangian submanifold $L$ of a symplectic manifold: the minimal positive symplectic area of a disc with boundary on $L$. They also showed that this quantity is bounded from above by $\pi/n$ for a Lagrangian torus inside the $2n$-dimensional unit disc equipped with the standard symplectic form. A Lagrangian torus for which this upper bound is attained is called extremal. We show that all extremal Lagrangian tori inside the four-dimensional unit disc are contained in the boundary $\partial D^4=S^3$. It also follows that all such tori are Hamiltonian isotopic to the product torus $S^1_{1/\sqrt{2}} \times S^1_{1/\sqrt{2}} \subset S^3$. This provides an answer to a question by L. Lazzarini in the four-dimensional case.
\end{abstract}

\keywords{Capacities, Extremal Lagrangian tori, monotone Lagrangian tori}

\maketitle

\section{Introduction and results}
In the following we will consider the standard even dimensional symplectic vector space $(\C^n,\omega_0:=dx_1\wedge dy_1+\hdots+dx_n \wedge dy_n)$, as well as the projective space $(\CP^n,\omega_{\OP{FS},r})$ endowed with the Fubini-Study symplectic two-form. We here normalise $\omega_{\OP{FS},r}$ so that a line $\ell \subset \CP^n$ has symplectic area equal to $\int_\ell \omega_{\OP{FS},r}=\pi r^2$. We also write $\omega_{\OP{FS}}:=\omega_{\OP{FS},1/\sqrt{\pi}}$. See Section \ref{sec:prel} for more details.

Neck-stretching techniques were successfully used in \cite{PuncturedHolomorphic} by K. Cieliebak and K. Mohnke in order to prove the Audin conjecture, first formulated in \cite{FibresNormaux} by M. Audin: Every Lagrangian torus in $\C^n$ or $\CP^n$ bounds a disc of positive symplectic area and Maslov index equal to two. The same techniques were also used to deduce properties concerning the following quantity for a Lagrangian submanifold, which was introduced in the same article. (We here restrict our attention to Lagrangian tori.) Given a Lagrangian torus $L \subset (X,\omega)$ inside an arbitrary symplectic manifold, we define
\[ A_{\OP{min}}(L):=\inf_{A \in \pi_2(X,L) \atop \int_A \omega > 0} \int_A \omega \in [0,+\infty].\]
This quantity can then be used in order to define a capacity for the symplectic manifold $(X,\omega)$ as follows:
\[ c_{\OP{Lag}}(X,\omega) := \sup_{L\subset (X,\omega) \text{ Lag. torus}} A_{\OP{min}}(L) \in [0,+\infty].\]
We refer to \cite{PuncturedHolomorphic} for the properties satisfied by this capacity. In view of this it is natural to consider:
\begin{defn}[\cite{PuncturedHolomorphic}] A Lagrangian torus $L \subset (X,\omega)$ satisfying
\[A_{\OP{min}}(L)=c_{\OP{Lag}}(X,\omega) \]
is called \emph{extremal}.
\end{defn}

The above capacity has been computed only for a limited number of symplectic manifolds, notably:
\begin{thm}[Theorem 1.1 and Corollary 1.3 in \cite{PuncturedHolomorphic}]
\label{thm:cap}
We have
\begin{eqnarray} \label{1} & & c_{\OP{Lag}}(B^{2n},\omega_0)  =  \pi/n,\\
\label{2} & & c_{\OP{Lag}}(\CP^n,\omega_{\OP{FS},r}) = r^2\pi/(n+1),
\end{eqnarray}
and in particular $c_{\OP{Lag}}(D^{2n},\omega_0)  =  \pi/n$.
\end{thm}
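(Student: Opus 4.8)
The plan is to prove each equality by sandwiching the capacity between a lower bound coming from explicit tori and an upper bound coming from punctured holomorphic curves. For the lower bounds I would exhibit extremal tori directly. In $(\CP^n,\omega_{\OP{FS},r})$ the monotone Clifford torus $T_{\OP{Cl}}=\{[z_0:\dots:z_n]\co |z_0|=\dots=|z_n|\}$ bounds, through each of its points, holomorphic discs of Maslov index two and symplectic area exactly $r^2\pi/(n+1)$, and a direct computation shows that every class in $\pi_2(\CP^n,T_{\OP{Cl}})$ of positive area has area at least this value; hence $A_{\OP{min}}(T_{\OP{Cl}})=r^2\pi/(n+1)$ and $c_{\OP{Lag}}\ge r^2\pi/(n+1)$. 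For the ball the product torus $\prod_{i=1}^n S^1_{\rho}\subset\C^n$ has $A_{\OP{min}}=\pi\rho^2$, realised by the basic disc in a single factor, and it is contained in the open unit ball precisely when $n\rho^2<1$; letting $\rho^2\nearrow 1/n$ gives $c_{\OP{Lag}}(B^{2n},\omega_0)\ge\pi/n$, and the same tori yield the bound for $D^{2n}$ since a Lagrangian torus in the closed disc lies in a slightly smaller open ball.

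The substance is the reverse inequality, which I would establish by the neck-stretching method. Given an arbitrary Lagrangian torus $L$, I would work in $(\CP^n,\omega_{\OP{FS},r})$; for the ball I would realise $B^{2n}$ as the complement of a neighbourhood of a complex hyperplane $H_\infty$ disjoint from $L$, normalised so that the line class $[\ell]$ has area equal to the capacity of the ball. Fixing a generic $\omega$-compatible almost complex structure $J$, I would stretch the neck along the unit cosphere bundle $S^*_\epsilon L$ of a Weinstein neighbourhood of $L$, applied to a family of $J$-holomorphic rational curves of degree $d$ passing through a maximal generic collection of points of $L$ (their existence being guaranteed by the nonvanishing of the corresponding genus-zero Gromov--Witten invariants). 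By SFT compactness these curves converge to a holomorphic building whose top level lies in the completion of $\CP^n\setminus L$, whose middle levels lie in the symplectization $\mathbb{R}\times S^*L$, and whose bottom level consists of punctured discs in $T^*L$ that cap off to genuine holomorphic discs with boundary on $L$.

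The crux is then an index-and-area accounting for this building. The total Maslov index carried by the disc components equals $2c_1(\CP^n)\cdot d[\ell]=2d(n+1)$, with $2\,(d[\ell]\cdot H_\infty)$ absorbed at infinity in the ball case, leaving $2dn$. Genericity of $J$, the point constraints, and the minimal-Maslov-number input from the proof of the Audin conjecture should force every disc component to be a \emph{rigid} disc of Maslov index exactly two, ruling out nonpositive-index discs and planes, sphere bubbles carrying Chern number, and nontrivial multiple covers. With all disc components of Maslov index two, their number is $d(n+1)$ in the projective case and $dn$ in the ball case, while their areas are each at least $A_{\OP{min}}(L)$ and sum to at most the total area $d\pi r^2$, respectively $d\pi$. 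Thus
\[
d(n+1)\,A_{\OP{min}}(L)\le d\pi r^2, \qquad dn\,A_{\OP{min}}(L)\le d\pi,
\]
giving $A_{\OP{min}}(L)\le r^2\pi/(n+1)$ and $A_{\OP{min}}(L)\le\pi/n$; taking the supremum over $L$ yields the upper bounds and hence the equalities.

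I expect the main obstacle to be exactly the compactness-and-transversality package behind the neck-stretching, together with the bookkeeping that pins the disc components down to minimal Maslov index. One must achieve regularity simultaneously for all levels of the building while preserving the Lagrangian boundary condition, understand the Reeb dynamics and Conley--Zehnder indices on the flat cosphere bundle $S^*\mathbb{T}^n\cong\mathbb{T}^n\times S^{n-1}$ well enough to control which orbits can appear, and make the distribution of the point constraints and of the intersection with $H_\infty$ across the building rigorous enough to exclude every undesired degeneration. This analytic core is precisely the content of the Cieliebak--Mohnke machinery invoked above, and it is where essentially all of the difficulty lies.
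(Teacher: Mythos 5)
Your overall strategy coincides with the one behind the paper's treatment of this statement: lower bounds from explicit product/Clifford tori, and upper bounds by stretching the neck along the cosphere bundle of a Weinstein neighbourhood of an arbitrary Lagrangian torus, followed by an area count on the limit building. (The paper itself cites \cite{PuncturedHolomorphic} for the full theorem and only reproves part \eqref{1} for $n=2$ in Section \ref{sec:proof}.) The lower-bound half of your argument is fine. The problem is the upper-bound half, which is the entire content of the theorem: you ultimately delegate it to ``the Cieliebak--Mohnke machinery,'' i.e.\ to the very result being proved, and the bookkeeping you supply in its place contains concrete errors.

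First, the components of the limit building that compactify to discs with boundary on $L$ are the \emph{top-level} components in the completion of $\CP^n\setminus L$ (their punctures are capped by the closed geodesics they are asymptotic to); the bottom-level components in $T^*L$ are punctured \emph{spheres}, not ``punctured discs that cap off to discs with boundary on $L$.'' Second, your count of components rests on the claim that genericity plus ``the minimal-Maslov-number input from the proof of the Audin conjecture'' forces every disc component to have Maslov index exactly two. This is circular -- the Audin conjecture is an output of the same neck-stretching analysis -- and in any case that input only produces \emph{one} Maslov-two disc; it says nothing about the remaining components, whose multiply covered pieces and Morse--Bott asymptotics are precisely where transversality fails. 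The correct accounting needs no Maslov indices at all: one bounds from below the \emph{number} of top-level components (each has positive symplectic area, hence area at least $A_{\OP{min}}(L)$, and exactly one meets the divisor by positivity of intersection), and this count is forced by a Fredholm index computation for the constrained bottom-level component. Third, that index computation is the step your constraint scheme cannot deliver: with point constraints alone, low-puncture-number components absorb the constraints. In dimension four, for instance, cylinders in $T^*\T^2$ have index $2$ and sweep out a two-dimensional family, so a cylinder through any prescribed point of $L$ exists generically, and the limit building can then be too small to yield the inequality. This is exactly why \cite{PuncturedHolomorphic}, and Proposition \ref{prop:three} of this paper, impose a \emph{tangency (jet) condition} at the marked point: genericity rules out cylinders satisfying the tangency, so the constrained component must have at least three punctures, producing the two planes disjoint from $D_\infty$ and hence $2A_{\OP{min}}(L)\le\pi(1+\delta)$ in the four-dimensional case of \eqref{1}. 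Without carrying out this Morse--Bott index analysis -- which you acknowledge omitting -- the inequalities $d(n+1)A_{\OP{min}}(L)\le d\pi r^2$ and $dn\,A_{\OP{min}}(L)\le d\pi$ are unsupported assertions rather than conclusions.
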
A straight-forward calculation shows that the $n$-dimensional Clifford torus
\[L_{\OP{Cl}}:=\left(S^1_{\frac{1}{\sqrt{n}}}\right)^n \subset S^{2n-1} = \partial D^{2n} \subset (\C^n,\omega_0),\]
contained inside the boundary of the $2n$-dimensional unit disc is extremal. In the case when $n=1$, the Clifford torus is clearly the only extremal Lagrangian torus. Furthermore, a monotone Lagrangian torus $L \subset (\CP^n,\omega_{\OP{FS}})$ is extremal, as follows by elementary topological considerations together with the fact that there exists a representative of $\pi_2(\CP^2,L)$ having Maslov index two and positive symplectic area by \cite[Theorems 1.1, 1.2]{PuncturedHolomorphic}. (For previous related results, consider \cite{NewObstruction}, \cite{Polterovich:MaslovClass}, \cite{Oh:spectral}, \cite{FloerAnomalyI}, \cite{Buhovsky:MaslovClass}, and \cite{FloerHomUniv}.)

In \cite{PuncturedHolomorphic} the author learned about the following two conjectures, the first one originally due to L. Lazzarini:
\begin{Conj}
\label{conj:1} All extremal Lagrangian tori $L \subset (D^{2n},\omega_0)$ are contained inside the boundary $\partial D^{2n}=S^{2n-1}$.
\end{Conj}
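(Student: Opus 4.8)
The plan is to extract rigidity from the extremal value $A_{\OP{min}}(L)=\pi/n$ by studying the area-minimizing holomorphic discs bounded by $L$, and to show that saturating the capacity forces these discs---and hence $L$---onto $S^{2n-1}$. I would begin with the elementary observation that an extremal torus must at least \emph{touch} the boundary. If instead $L\subset\OP{int}D^{2n}$, then by compactness $L\subset B^{2n}(r)$ for some $r<1$; since $A_{\OP{min}}(L)$ is unchanged whether computed in $B^{2n}(r)$ or in $D^{2n}$ (both are contractible and contain $L$, so the relevant relative classes and their areas coincide), Theorem~\ref{thm:cap} together with the conformal scaling $\phi_\lambda(z)=\lambda z$, which multiplies every disc area by $\lambda^2$, gives $A_{\OP{min}}(L)\le c_{\OP{Lag}}(B^{2n}(r))=r^2\pi/n<\pi/n$, contradicting extremality. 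The same scaling shows that $\pi/n$ is the largest value of $A_{\OP{min}}$ compatible with $L\subset D^{2n}$, so equality is a genuine borderline condition, and the substance of the conjecture is to promote \emph{touches} to \emph{is contained in}.

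Next I would produce and control the minimal discs. By the Cieliebak--Mohnke results underlying Theorem~\ref{thm:cap}---the Audin conjecture together with the computation of $c_{\OP{Lag}}$---the torus $L$ bounds a Maslov-index-two holomorphic disc, and extremality pins the minimal such area to the maximal admissible value $\pi/n$. Embedding $D^{2n}\hookrightarrow(\CP^n,\omega_{\OP{FS}})$ as the complement of a hyperplane, making the boundary sphere $S^{2n-1}$ (or a slightly larger concentric sphere containing $L$ in its interior) of contact type, and neck-stretching along it, I would analyze the resulting holomorphic buildings. The Reeb flow on the standard $S^{2n-1}$ is the Hopf flow: all orbits are closed, come in Morse--Bott families, and have action in $\pi\,\mathbb{Z}_{>0}$, the simple orbits bounding meridian discs of area exactly $\pi$. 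Because any punctured component asymptotic to such an orbit carries symplectic area at least $\pi>\pi/n$, the area budget $\pi/n$ forbids the minimizers from crossing the neck; the accounting should instead force their boundaries to be squeezed against $S^{2n-1}$ and the discs themselves to reach the boundary sphere.

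To conclude I would run this over the whole lattice $\pi_1(L)\cong\mathbb{Z}^n$ of boundary classes. The minimizing discs should, in the neck-stretched limit, assemble into degree-one holomorphic spheres---lines in $\CP^n$ of area $\pi$---whose pieces are the area-$\pi/n$ discs bounded by $L$; exact saturation of the area budget then forces $L$ to meet each such line along a Reeb (Hopf) orbit, i.e. to lie in $S^{2n-1}$. In dimension four ($n=2$) this last step can be made rigorous with the tools special to that dimension: positivity of intersections and the adjunction formula force the discs to be embedded and mutually disjoint, so they foliate a collar whose leaves close up along Hopf circles, and matching this foliation to the contact boundary pins $L$ onto $S^3$ (after which monotonicity and the classification of monotone Lagrangian tori in $\CP^2$ identify $L$ with the Clifford torus, recovering the full uniqueness statement).

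The step I expect to be the main obstacle is precisely this passage from the area bookkeeping to the geometric conclusion in higher dimensions. In dimension greater than four there is neither positivity of intersections nor an adjunction formula, so the minimizing discs need be neither embedded nor disjoint and the foliation argument breaks down. One would then have to control the full symplectic field theory compactification directly: securing transversality for the relevant buildings (for a generic or abstractly perturbed $J$), computing Fredholm indices to exclude multiply covered and nodal neck-crossing configurations, and ruling out an area-$\pi/n$ disc whose boundary lies strictly inside $\OP{int}D^{2n}$. Carrying out this higher-dimensional curve analysis is the essential difficulty that the four-dimensional case circumvents.
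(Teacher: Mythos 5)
Your opening reduction is correct but easy: scaling together with Theorem \ref{thm:cap} does show that an extremal torus must \emph{touch} $S^{2n-1}$, and this is not where the content lies. Note also that the paper does not prove the conjecture as stated; it proves only the four-dimensional case (Theorem \ref{thm:main}), so the comparison below is with that proof. The first genuine gap is that the objects you propose to stretch, ``area-minimizing holomorphic discs'' of area $\pi/n$ with boundary on $L$, are not known to exist: $A_{\OP{min}}(L)$ is an infimum over classes in $\pi_2(X,L)$, and neither extremality nor the Cieliebak--Mohnke results provide an honest $J$-holomorphic disc realising it for the almost complex structures in your stretching family. The second, more fundamental gap is that a neck along a sphere $S^{2n-1}_{1+\epsilon}$ enclosing $L$ is structurally blind to the dichotomy you must detect: you cannot stretch along $S^{2n-1}$ itself (the hypersurface must be disjoint from $L$, which touches it), and the slightly larger sphere sees exactly the same data -- the same Hopf dynamics, the same action spectrum, the same splitting of degree-one spheres -- whether $L$ lies in $S^{2n-1}$ or merely touches it. Concretely, your area accounting fails on both sides of the neck: components in the outer level of degree $d$ with negative asymptotics of total action $a$ have area $\pi r^2 d-a$, which can be arbitrarily small, and components in the inner level with boundary on $L$ have area equal to the asymptotic action \emph{plus} a period of the Liouville form on $\partial C\subset L$, which can be negative; so there is no lower bound of $\pi$ per Hopf asymptotic. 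Worse, even granting that the minimizers never cross the neck, the conclusion is only that they stay inside $B_{1+\epsilon}$ -- which is vacuous, since $L$ and any disc it bounds lie there anyway -- not that anything is ``squeezed against'' $S^{2n-1}$. The final assertion, that saturation forces $L$ to meet lines along Hopf orbits, is given no mechanism at all, and your four-dimensional fallback (embedded, disjoint minimizing discs foliating a collar) is likewise unproved and is not how the paper concludes.

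The paper's proof locates the contradiction exactly where your construction cannot: at an interior point of $L$. One stretches the neck along the unit cotangent bundle $\Phi(S^*_{\epsilon/2}\T^2)$ of $L$ itself (the boundary of a Weinstein neighbourhood), applied to Gromov's degree-one spheres through a point $p\in L$ with a tangency condition. Since the flat torus has no contractible geodesics (Lemma \ref{lem:noplanes}), an index and transversality argument (Proposition \ref{prop:three}) forces the limit building to contain at least two planes in $\CP^2\setminus L$ disjoint from $D_\infty$; extremality gives each of these area at least $\pi/2$, so the unique component $A_\infty$ meeting $D_\infty$ has area at most $\pi\delta$ (Proposition \ref{prop:area}). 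Positivity of intersection and Wendl's automatic transversality -- the genuinely four-dimensional inputs -- then make the moduli space containing $A_\infty$ a compact $S^1$-family of embedded planes whose compactified evaluation map sweeps out a null-homology of a nonzero multiple of $[L]$. The hypothesis $L\not\subset\partial D^4$ enters through the cotangent-fibre circle $S_0$ over an interior point of $L$: it lies in $B^4\setminus L$, links $L$ once, so the sweeping chain must cross it, and the monotonicity bound \ref{M} forces some plane in the family to have area at least $\hbar>\pi\delta$, a contradiction. Your proposal contains no analogue of this linking-plus-monotonicity mechanism, which is precisely the step that upgrades ``touches the boundary'' to ``is contained in the boundary.''
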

\begin{Conj}
\label{conj:2} All extremal Lagrangian tori $L \subset (\CP^n,\omega_{\OP{FS}})$ are monotone.
\end{Conj}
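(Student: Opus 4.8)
The plan is to show that extremality forces the monotonicity defect homomorphism to vanish. Write $\lambda_0 := \tfrac{1}{2(n+1)}$ and consider the homomorphism $\delta := \omega - \lambda_0\,\mu \co \pi_2(\CP^n,L) \to \mathbb{R}$, where $\mu$ denotes the Maslov index; the torus $L$ is monotone precisely when $\delta \equiv 0$. Since $\pi_1(\CP^n)=0$, the boundary map $\pi_2(\CP^n,L)\to\pi_1(L)\cong\mathbb{Z}^n$ is surjective with kernel generated by the line class $\ell$, for which $\omega(\ell)=1$ and $\mu(\ell)=2(n+1)$; thus $\delta(\ell)=0$ automatically, the constant $\lambda_0$ is the only possible monotonicity constant, and it remains to prove that $\delta$ vanishes on $n$ further classes which, together with $\ell$, span $\pi_2(\CP^n,L)\otimes\mathbb{Q}$.

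The engine for producing such classes is the degeneration underlying the capacity bound of Theorem \ref{thm:cap}. First I would fix a generic point $p\in L$ and a generic $\omega_{\OP{FS}}$-compatible $J$, and neck-stretch $\CP^n$ along the boundary of a Weinstein neighbourhood of $L$, applied to the (nonempty, by Gromov--Witten theory) family of $J$-holomorphic lines through $p$. Invoking the compactness and genericity analysis of \cite{PuncturedHolomorphic}, the limit building has boundary-carrying components consisting solely of Maslov-index-$2$ holomorphic discs with boundary on $L$; since the total Maslov index of these components equals $\mu(\ell)=2(n+1)$, there are exactly $n+1$ of them, say $u_0,\dots,u_n$, and their relative classes satisfy $\sum_j [u_j]=\ell$ in $\pi_2(\CP^n,L)$. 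Consequently $\sum_j \omega(u_j)=\omega(\ell)=1$, while extremality gives $\omega(u_j)\ge A_{\OP{min}}(L)=\tfrac{1}{n+1}$ for every $j$. As $n+1$ numbers each at least $\tfrac{1}{n+1}$ and summing to $1$ must all equal $\tfrac{1}{n+1}$, each disc has area exactly $\tfrac{1}{n+1}$ and hence $\delta(u_j)=\tfrac{1}{n+1}-\lambda_0\cdot 2=0$. This is the crucial rigidity: in the extremal case the pigeonhole inequality used in the capacity bound is forced to be an equality for every piece of the degenerate line.

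Next I would let $p$ vary over $L$ and the line vary over the $\CP^{n-1}$ of directions at $p$, collecting the resulting minimal discs into a family $\mathcal{M}$ of Maslov-$2$ holomorphic discs of area $\tfrac{1}{n+1}$, all satisfying $\delta=0$. The remaining task is to show that the boundary classes $\partial[u]$, $u\in\mathcal{M}$, span $\pi_1(L)\otimes\mathbb{Q}$; granting this, $\delta$ vanishes on a spanning subset of $\pi_2(\CP^n,L)\otimes\mathbb{Q}$ (these classes together with $\ell$), hence $\delta\equiv 0$ and $L$ is monotone. To establish the spanning I would argue by contradiction: were all boundary classes $\partial[u]$ contained in a proper subspace, some nonzero class in $H^1(L;\mathbb{Q})$ would pair trivially with every minimal disc boundary, which I would contradict using that the discs of $\mathcal{M}$ reassemble into the full family of lines sweeping out $\CP^n$, so that minimal discs pass through every point of $L$ in sufficiently many independent directions.

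The main obstacle, and the reason the argument is delicate for $n\ge 3$, is the compactness-and-genericity input of the second paragraph: ruling out sphere bubbles, multiply-covered components, higher-Maslov discs, and any loss of $\omega$-energy into the neck, so that the line genuinely breaks into exactly $n+1$ simple Maslov-$2$ discs. In the four-dimensional case $n=2$ this is controlled by positivity of intersections and automatic regularity, but these dimension-four tools are unavailable in general and must be replaced by a careful index and action analysis within the framework of \cite{PuncturedHolomorphic} (for instance using an auxiliary Donaldson-type divisor to suppress bubbling). A secondary difficulty is making the spanning argument of the third paragraph rigorous without a priori control of open Gromov--Witten invariants for an arbitrary extremal $L$.
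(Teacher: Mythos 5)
You should note first that this statement is Conjecture \ref{conj:2} of the paper: it is presented there as an \emph{open} problem, and the paper proves neither it nor even its four-dimensional case (only Conjecture \ref{conj:1} is resolved, for $n=2$, by Theorem \ref{thm:main}). So there is no proof in the paper to compare against; your proposal must stand on its own, and it does not --- the two steps you yourself flag as ``obstacles'' in your final paragraph are not technical loose ends but the entire mathematical content. Your second paragraph does contain a correct and genuinely useful observation: granting the output of \cite{PuncturedHolomorphic} that the limit building has at least $n+1$ boundary-carrying branches of positive symplectic area (this is the input that yields the capacity bound, re-established for $n=2$ by Proposition \ref{prop:three} here), extremality plus the pigeonhole forces there to be \emph{exactly} $n+1$ branches, each of area exactly $1/(n+1)$. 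But passing from equal areas to $\delta(u_j)=0$ requires every branch to have Maslov index exactly $2$, and area rigidity gives nothing here: the indices are even and sum to $2(n+1)$, but distributions such as $(0,4,2)$ or $(-2,4,4,\hdots)$ are not excluded. Ruling out branches of Maslov index $\le 0$ is not a citable consequence of \cite{PuncturedHolomorphic} for an arbitrary torus, and it is dangerously close to circular: a non-monotone torus always has classes in $\pi_2(\CP^n,L)$ with Maslov index $0$ and positive area (if $\delta$ vanished on $\ker\mu$ it would be proportional to $\mu$, hence zero by evaluation on $\ell$), and nothing a priori prevents such classes from appearing as branches. For $n=2$ a somewhere-injective plane with unconstrained asymptotics has index $\mu-1\ge 0$ for generic $J$, which forces $\mu\ge 2$; for $n\ge 3$ the analogous bound $\mu+n-3\ge 0$ permits $\mu\le 0$, and multiply covered or broken branches escape genericity arguments altogether. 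Note that even the present paper, working in dimension four with positivity of intersection and automatic transversality available, deliberately avoids claiming any Maslov control of its pieces: Proposition \ref{prop:index} establishes only that the index of $A_\infty$ is odd and positive.

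The second gap, the spanning step, is a missing idea rather than a missing verification. Even granting the ideal building structure, a single building produces $n+1$ classes with $\delta=0$ whose boundary classes \emph{sum to zero} in $H_1(L;\mathbb{Z})$; they may therefore span a proper subspace (for $n=2$, boundaries $\gamma,\gamma,-2\gamma$ are consistent with everything you have established). Your proposed remedy --- vary $p$ and the line and argue that minimal discs pass through every point of $L$ ``in sufficiently many independent directions'' --- has no mechanism behind it: the fact that the lines upstairs sweep out $\CP^n$ says nothing about the homology classes on $L$ of the closed geodesics to which the limit discs are asymptotic, so nothing forces the boundary classes arising from different buildings out of a fixed proper subspace of $H_1(L;\mathbb{Q})$. (For the Clifford torus the boundary classes do span, but that is the desired conclusion, not an available input.) In short: your first-paragraph reduction and the pigeonhole rigidity are sound, but both pillars of the argument --- the Maslov-$2$ structure of the building and the spanning of the boundary classes --- are unproven, and the conjecture remains open.
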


Our main result is a positive answer to Conjecture \ref{conj:1} in dimension four.
\begin{thm}
\label{thm:main}
All extremal Lagrangian tori $L \subset (D^4,\omega_0)$ are contained inside the boundary, i.e.~$L \subset S^3 = \partial D^4$.
\end{thm}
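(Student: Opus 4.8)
The plan is to transport the problem into $(\CP^2,\omega_{\OP{FS},1})$, into which I embed the closed ball $D^4$ as the complement of a neighbourhood of a line $\ell_\infty$ of area $\pi$, so that $\C^2=\CP^2\setminus\ell_\infty$ carries the Liouville one-form $\lambda_0=\tfrac12\sum_i(x_i\,dy_i-y_i\,dx_i)$ with $d\lambda_0=\omega_0$. Since $D^4$ is contractible, $\pi_2(D^4,L)\cong \pi_1(L)\cong\mathbb{Z}^2$ and the symplectic area of a class is just the period of $\lambda_0$ over the corresponding loop; hence $A_{\OP{min}}(L)$ is the smallest positive value of the homomorphism $[\lambda_0|_L]\in H^1(L;\mathbb{R})$. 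First I would record the easy half of the statement: if $R:=\max_{L}|z|$ were strictly less than $1$, then $L$ would lie in the smaller ball $\overline{B^4(R)}$, and combining the inclusion bound $A_{\OP{min}}(L)\le c_{\OP{Lag}}(\overline{B^4(R)})$ with the scaling $c_{\OP{Lag}}(\overline{B^4(R)})=\pi R^2/2$ coming from Theorem~\ref{thm:cap} would give $A_{\OP{min}}(L)\le\pi R^2/2<\pi/2$, contradicting extremality. Thus $L$ meets $S^3$, and at each such point $p\in L\cap S^3$ the torus is tangent to $S^3$ and, by a short linear-algebra argument, $T_pL$ contains the Reeb (Hopf) direction. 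The entire difficulty is to upgrade ``$L$ meets $S^3$'' to ``$L\subset S^3$''.

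For this I would bring in pseudoholomorphic curves. Fix a tame almost complex structure $J$ making $\ell_\infty$ holomorphic and cylindrical near $S^3$, whose Reeb dynamics is the Hopf flow, all of whose closed orbits are Morse--Bott with action in $\pi\mathbb{Z}$. By the work of Gromov and McDuff on $J$-holomorphic spheres in $\CP^2$, the degree-one $J$-holomorphic spheres foliate $\CP^2$ by lines of area $\pi$, each meeting $\ell_\infty$ in exactly one point; this positivity of intersection and automatic transversality is the genuinely four-dimensional input confining the argument to $D^4$. On the other hand, the existence results of Cieliebak--Mohnke furnish a $J$-holomorphic disc with boundary on $L$ of Maslov index two, while the capacity bound \eqref{1} exhibits a positive-area disc class of area at most $\pi/2$; since extremality forces every positive class to have area at least $\pi/2$, I would conclude that $L$ bounds a minimal $J$-holomorphic disc $u$ of Maslov index two and area exactly $\pi/2=\tfrac12\int_\ell\omega_{\OP{FS},1}$, i.e.\ exactly half the area of a line.

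The crux is then a ``bisected line'' rigidity. The minimal disc $u$ should be recognised as one half of a degree-one line: the $J$-line determined by $\gamma=\partial u$ meets $L$ cleanly along $\gamma$ and is split by it into the inner disc $u$ of area $\pi/2$ and an outer disc of complementary area $\pi-\pi/2=\pi/2$ carrying the single intersection with $\ell_\infty$. Stretching the neck along $S^3$ and using the action identities, the only closed Reeb orbits having action in $\pi\mathbb{Z}$, I would show that no symplectic area can leak into the neck, which pins the splitting circle $\gamma$ onto $S^3$; indeed, in the integrable model a line is cut into two area-$\pi/2$ halves exactly along a circle $\{|z_1|=|z_2|=1/\sqrt2\}\subset S^3$, and SFT compactness is meant to promote this to general $J$. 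Letting $\gamma$ run over the lines through the points of $L$, the minimal discs sweep out $L$, so every point of $L$ lies on such a splitting circle, whence $L\subset S^3$.

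The main obstacle is the equality analysis inside the neck-stretching limit. One must control the full Gromov--SFT compactness of the degenerating lines, ruling out or accounting for sphere and disc bubbling, multiply covered components, and buildings with several Reeb levels, and prove that the only limit compatible with area $\pi$, Maslov index two, a single intersection with $\ell_\infty$, and the extremal constraint is the clean bisected line. The Morse--Bott nature of the Hopf flow, with orbits occurring in $S^2$-families, makes the asymptotic and transversality bookkeeping delicate, and one must separately verify that the boundary evaluation of the family of minimal discs is surjective onto $L$, so that the pointwise conclusion propagates to the whole torus. These two points, the rigidity in the compactness and the surjectivity of the sweep, are where I expect the real work to lie.
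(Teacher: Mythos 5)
Your opening step is fine: the scaling/monotonicity argument showing $A_{\OP{min}}(L)\le \pi R^2/2$ for $L\subset \overline{B^4(R)}$ correctly proves that an extremal torus must \emph{touch} $S^3$. But the rest of the plan has two gaps that are fatal as stated. First, the central technical device --- stretching the neck along $S^3$ and applying SFT compactness to discs with boundary on $L$ --- cannot be set up in your situation. Compactness for curves with Lagrangian boundary in a neck-stretching limit requires the Lagrangian to be cylindrical (translation-invariant) inside the neck. Your torus meets $S^3$ but is, a priori, not contained in it (that containment \emph{is} the theorem), so near each contact point $L$ is tangent to $S^3$ from the inside and is in no sense cylindrical; nor can you stretch along a nearby sphere $S^3_{1\pm\epsilon}$, since every sphere close to $S^3$ is either crossed by $L$ or encloses it with no gain. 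Second, the ``bisected line rigidity'' is not a deferred verification: it is essentially the entire content of the theorem, and you offer no mechanism for proving it. There is no reason why the minimal disc $u$ should close up with an outer disc to form a smooth degree-one $J$-holomorphic sphere containing $\partial u$; neither positivity of intersection nor automatic transversality produces such a doubling, and the asserted ``pinning'' of $\partial u$ onto $S^3$ presupposes exactly the compactness setup that is unavailable by the first point. (Two smaller inaccuracies: extremality forces the group of disc areas to be $(\pi/2)\Z$, so the Cieliebak--Mohnke Maslov-two disc a priori has area some positive \emph{multiple} of $\pi/2$ rather than exactly $\pi/2$; and the surjectivity of the sweep of $L$ by boundaries of minimal discs, which you flag yourself, is also unproven.)

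For contrast, the paper's proof avoids both problems. It argues by contradiction: if $L\not\subset S^3$, one can place a cosphere fibre $S_0$ of a Weinstein neighbourhood of $L$ inside $B^4\setminus L$; this circle links $L$ non-trivially, and a fixed almost complex structure near $S_0$ yields a monotonicity constant $\hbar>0$ so that any holomorphic curve through $S_0$ has area at least $\hbar$. The neck is then stretched not along $S^3$ but along the unit cotangent bundle $\Phi(S^*_{\epsilon/2}\T^2)$, i.e.\ the boundary of the Weinstein neighbourhood, and it is applied to \emph{closed} degree-one spheres satisfying a tangency constraint at a point of $L$, so compactness is standard. Extremality enters only through coarse area estimates: the limit building contains two planes of area at least $\pi/2$ each, which forces the plane $A_\infty$ through $D_\infty$ to have area at most $\pi\delta$; automatic transversality and compactness then organise the deformations of $A_\infty$ into a compact $S^1$-family of embedded planes sweeping out a null-homology of (a multiple of) $[L]$. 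That chain must cross $S_0$ by the linking argument, so some plane of area at most $\pi\delta<\hbar$ passes through $S_0$, contradicting monotonicity. The moral difference is that the paper never needs to identify what the limit curves \emph{are} (your bisected lines); it only needs area bounds plus a topological linking obstruction, which is why its argument closes and yours, as written, does not.
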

After a consideration of the possible Lagrangian tori inside the three-dimensional unit sphere using classical techniques, we also obtain the following classification result.
\begin{cor}
\label{cor:main}
All extremal Lagrangian tori $L \subset (D^4,\omega_0)$ are isotopic to the Clifford torus $S^1_{1/\sqrt{2}} \times S^1_{1/\sqrt{2}} \subset S^3 = \partial D^4$ by a Hamiltonian isotopy of $(D^4,\omega_0)$ preserving the boundary set-wise.
\end{cor}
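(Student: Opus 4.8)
The plan is to combine Theorem \ref{thm:main}, which places $L$ inside $S^3=\partial D^4$, with an essentially classical analysis of Lagrangian tori contained in the standard contact sphere. Write $\lambda_0=\tfrac12\sum_i(x_i\,dy_i-y_i\,dx_i)$, so that $d\lambda_0=\omega_0$ and $\alpha:=\lambda_0|_{S^3}$ is the standard contact form, whose Reeb flow is the Hopf flow of $\pi\co S^3\to\CP^1$. First I would observe that any Lagrangian torus $L\subset S^3$ is automatically invariant under the Reeb flow. Indeed, at $p\in L$ the plane $T_pL$ is a Lagrangian $2$-plane inside $T_pS^3=\mathbb{R}R_p\oplus\xi_p$, where $R_p$ is the Reeb vector and $\xi_p=\ker\alpha_p$ is the contact plane, on which $\omega_0$ is nondegenerate; since $\iota_{R_p}\omega_0$ vanishes on $T_pS^3$, if $R_p\notin T_pL$ then $T_pL$ would map isomorphically onto $(\xi_p,\omega_0|_{\xi_p})$ and hence be symplectic, not Lagrangian. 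Thus $R$ is everywhere tangent to $L$, so $L$ is a union of Hopf fibres, and therefore $L=\pi^{-1}(\gamma)$ for an embedded closed curve $\gamma=\pi(L)\subset\CP^1$.

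Second, I would compute $A_{\OP{min}}$. Since $\omega_0=d\lambda_0$ and $L$ is Lagrangian, the restriction $\lambda_0|_L$ is closed, and Stokes' theorem gives $\int_{D^2}u^*\omega_0=\oint_{\partial D^2}u^*\lambda_0$ for every disc $u\co(D^2,\partial D^2)\to(D^4,L)$. Hence the area depends only on $[\partial u]\in H_1(L)$ through the period homomorphism $P:=[\lambda_0|_L]\co H_1(L)\to\mathbb{R}$; as $D^4$ is contractible, $\partial\co\pi_2(D^4,L)\to H_1(L)$ is onto, so $A_{\OP{min}}(L)$ is the infimum of the positive elements of $\im P$. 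A direct reduction-theoretic calculation shows that $\im P$ is generated by the period $\pi$ of a Hopf fibre and by $a_1$, the reduced symplectic area of one of the two regions $U_1,U_2\subset\CP^1$ cut out by $\gamma$, where $a_1+a_2=\pi$ is the total reduced area. Thus $\im P=\langle\pi,a_1\rangle$, whose least positive element equals $\pi/2=c_{\OP{Lag}}(D^4,\omega_0)$ if and only if $a_1=a_2=\pi/2$. Extremality of $L$ is therefore equivalent to $\gamma$ bisecting $\CP^1$ into two regions of equal area, the Clifford torus corresponding to the equator $\{|z_1|=|z_2|\}$.

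Third, I would upgrade this area equality to a Hamiltonian isotopy downstairs. Any embedded closed curve $\gamma$ bisecting $\CP^1$ can be carried to the equator by an area-preserving diffeomorphism: a smooth ambient isotopy takes $\gamma$ to the equator, and a Moser argument relative to the equator corrects it to be area preserving on each hemisphere. Since $\OP{Symp}(\CP^1)$ is connected and $H^1(\CP^1;\mathbb{R})=0$, this symplectomorphism is the time-one map of a Hamiltonian isotopy $\phi_t$ generated by some $H_t\co\CP^1\to\mathbb{R}$ with $\phi_1(\gamma)=\{|z_1|=|z_2|\}$.

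Finally I would lift $\phi_t$ to $D^4$. Setting $\widehat H_t(z):=\beta(|z|)\,H_t\bigl(\pi(z/|z|)\bigr)$ for a cutoff $\beta$ equal to $1$ near $|z|=1$ and vanishing near the origin, the function $\widehat H_t$ is invariant under the Hopf circle action and hence Poisson-commutes with $\tfrac12|z|^2$, so its Hamiltonian flow $\Phi_t$ preserves every sphere $\{|z|=c\}$, in particular $D^4$ and $S^3$ set-wise, and on $S^3$ it covers $\phi_t$ while commuting with the Hopf action. Consequently $\Phi_1(L)=\Phi_1(\pi^{-1}(\gamma))=\pi^{-1}(\phi_1(\gamma))$ is precisely the Clifford torus, and $\Phi_t$ is the desired boundary-preserving Hamiltonian isotopy of $(D^4,\omega_0)$. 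The step demanding genuine care, which I regard as the main obstacle, is exactly this last one: one must verify that the reduction picture yields a globally defined, smooth, boundary-preserving \emph{Hamiltonian} isotopy of $D^4$ rather than merely a contact isotopy of $S^3$, and that introducing the cutoff $\beta$ near the origin neither breaks invariance under the Hopf action nor spoils the property that the flow preserves $S^3$.
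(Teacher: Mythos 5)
Your proposal is correct and follows essentially the same route as the paper: use Theorem \ref{thm:main} to place $L$ in $S^3$, observe that the characteristic (Reeb/Hopf) foliation forces $L$ to be a union of Hopf fibres over an embedded curve $\gamma\subset\CP^1$, use extremality to show $\gamma$ bisects $\CP^1$ into two regions of area $\pi/2$, move $\gamma$ to the equator by a Hamiltonian isotopy of $\CP^1$, and lift it to an $S^1$-invariant Hamiltonian isotopy of $D^4$ preserving every concentric sphere. The only cosmetic differences are your period-homomorphism formulation of the extremality step (the paper lifts the smaller disc directly) and your radial cutoff $\beta(|z|)$ in the lifted Hamiltonian, where the paper uses $\|\mathbf{z}\|^2 H_s\circ p$ followed by a smoothing near the origin.
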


The proof of our main result Theorem \ref{thm:main} consists of analysing the pseudoholomorphic curves produced in Cieliebak-Mohnke's proof of Theorem \ref{thm:cap} from \cite{PuncturedHolomorphic}. Their proof is based upon a pseudoholomorphic curve technique called the ``splitting construction'' or ``stretching the neck'', which first appeared in the setting of symplectic field theory in the work \cite{IntroSFT} by Y. Eliashberg, A. Givental, and H. Hofer. Pseudoholomorphic curves were introduced by M. Gromov \cite{Gromov}. We will restrict our attention to real four-dimensional symplectic manifolds. In this setting pseudoholomorphic curves behave particularly well, since one can apply techniques such as positivity of intersection due to D. McDuff \cite{LocalCurve}, and automatic transversality which in the present setting is due to C. Wendl \cite{Auttrans}. These four-dimensional techniques are crucial to our proof, and it is not clear to the author if the argument can be modified to work in arbitrary dimensions.

The so-called splitting construction involves studying the limit of pseudoholomorphic curves under a sequence $J^\tau$, $\tau \ge 0$, of tame almost complex structures on $(X,\omega)$ which stretches the neck around a hypersurface $Y \subset (X,\omega)$ of contact type. Loosely speaking, such a sequence introduces a neck of the form $[-(\tau+\epsilon),\tau+\epsilon] \times Y \hookrightarrow X$ in a neighbourhood of $Y$, where the almost complex structure is cylindrical. In our case, the hypersurface $Y \subset X= \CP^n$ of contact type will be taken to be the unit cotangent bundle of the Lagrangian torus. The compactness results for sequences of $J^\tau$-holomorphic curves as $\tau \to +\infty$ were obtained in \cite{CompSFT} by F. Bourgeois, Y. Eliashberg, C. Wysocki, and E. Zehnder, and independently in \cite{CompactnessPunctured} by K. Cieliebak and K. Mohnke. Due to the fact that tori admit flat metrics which, moreover, induce foliations by families of closed geodesics, the limit of pseudoholomorphic curves under a stretching of the neck behave particularly well in this case. In addition to \cite{PuncturedHolomorphic}, Lagrangian tori have previously been studied using the splitting construction in a series of work; among others, see \cite{Unlinking} by the author together with J. D. Evans, \cite{Polydisks} by R. Hind and S. Lisi, and \cite{LagIsoTori} by the author together with E. Goodman and A. Ivrii.

\section{Preliminaries}
\label{sec:prel}
This paper concerns Lagrangian tori inside the open unit ball and closed unit disc
\[B^{2n} \subset D^{2n} \subset (\C^n,\omega_0=dx_1\wedge dy_1 + \hdots + dx_n \wedge dy_n)\]
endowed with the standard symplectic two-form $\omega_0$, as well as the projective space $(\CP^n,\omega_{\OP{FS}})$ endowed with the Fubini-Study symplectic two-form. Recall that a half-dimensional submanifold of a symplectic manifold is said to be {\bf Lagrangian}, if the symplectic form vanishes on its tangent space. It will be useful to compactify the open ball $B^{2n}_r \subset (\C^n,\omega_0)$ of radius $r>0$ to the projective plane
\[(\CP^n,\omega_{\OP{FS}_r}) \supset (\CP^n \setminus D_\infty,\omega_{\OP{FS},r}) = (B^{2n}_r,\omega_0)\]
endowed with the Fubini-Study symplectic form, where $D_\infty$ denotes the divisor at infinity. The Fubini-Study form $\omega_{\OP{FS},r}$ is here normalised so that a surface $\ell \subset \CP^n$ of degree one, i.e.~$[\ell]\in H_2(\CP^n)$ is the generator of positive symplectic area, satisfies $\int_\ell \omega_{\OP{FS},r}=\pi r^2$. We also write $\omega_{\OP{FS}}:=\omega_{\OP{FS},1/\sqrt{\pi}}$.

The above symplectic manifolds $(X,\omega)$ are {\bf monotone}: the first Chern class satisfies $c_1(X,\omega) = \kappa [\omega] \in H^2(X;\R)$ for some $\kappa >0$. More precisely, $\kappa >0$ can be chosen arbitrarily for $(\C^n,\omega_0)$, while $\kappa=(n+1)/(\pi r^2)$ for $(X,\omega)=(\CP^n,\omega_{\OP{FS},r})$. Recall that a Lagrangian torus inside $(X,\omega)$ is said to be {\bf monotone} provided that its Maslov class satisfies $\mu_L=2\kappa[\omega] \in H^2(X,L;\R)$ for a number $\kappa>0$ as above. Pseudoholomorphic curve techniques work particularly well for monotone Lagrangian tori, and they are known to satisfy many rigidity properties.

Recall that a {\bf tame almost complex structure} $J$ on a symplectic manifold $(X,\omega)$ is an endomorphism $J \in \OP{End}(TX)$ satisfying $J^2 =-\id_{TX}$ together with the property that $\omega(v,Jv)>0$ whenever $v \neq 0$. The space of tame almost complex structures is contractible by \cite{Gromov}. In the latter article Gromov also established his celebrated compactness theorem for pseudoholomorphic curves for a tame almost complex structure. Recall that, given a choice of almost complex structure $J$ and a Riemann surface $(\Sigma,i)$, a map
\[ u \colon (\Sigma,i) \to (X,J) \]
is {\bf pseudoholomorphic} given that the fully non-linear Cauchy-Riemann type equation $J \circ du = du \circ i$ is satisfied.

The present article will mainly consider punctured pseudoholomorphic spheres in non-compact symplectic manifolds. More precisely, outside of an open pre-compact sub-domain with smooth boundary, the symplectic manifold will consist of convex and concave cylindrical ends symplectomorphic to half symplectisations
\begin{gather*}
[0,+\infty) \times Y_+ \subset (\R \times Y_+,d(e^t\alpha_+)), \\
(-\infty,0] \times Y_- \subset (\R \times Y_-,d(e^t\alpha_-)),
\end{gather*}
respectively, where $t$ is a coordinate on the $\R$-factor, and $\alpha_\pm$ are contact one-forms on the closed manifolds $Y_\pm$. Recall the definition of the {\bf Reeb vector field} $R \in \Gamma(TY)$ on a contact manifold $(Y,\alpha)$ with contact form $\alpha$, which is determined by the equations
\[\alpha(R)=1, \:\: d\alpha(R,\cdot)=0.\]
Following \cite{CompSFT}, a tame almost complex structure $J$ on $(\R \times Y,d(e^t\alpha))$ is said to be {\bf cylindrical} given that:
\begin{itemize}
\item $J$ is invariant under translations of the $t$ coordinate;
\item $J\partial_t=R$; and
\item $J\xi =\xi$, where $\xi:=\ker \alpha \subset TY$.
\end{itemize}
The aforementioned article extends Gromov's compactness theorem to the space of pseudoholomorphic curves for tame almost complex structures that are cylindrical outside of a compact subset, given that these curves have a uniform bound on their Hofer energy (see \cite{CompSFT} for the definition).

By a {\bf finite energy curve} we will mean a pseudoholomorphic curve of finite (Hofer) energy. We will not give the formal definition, but we point out that a proper punctured pseudoholomorphic curve
\[ u \colon (\Sigma \setminus \{ p_1,\hdots,p_m\},i) \to (X,\omega),\]
where $(\Sigma,i)$ is a closed Riemann surface and $(X,\omega)$ has non-compact cylindrical ends, is a finite energy curve if and only if $u$ is asymptotic to cylinders $\R \times \gamma_i \subset \R \times Y_\pm$ contained in the cylindrical ends of $X$ at each of its punctures $p_i$, $i=1,\hdots,m$. Here $\gamma_i \subset Y_\pm$ are periodic integral curves of the Reeb vector field $R_\pm$ -- usually called {\bf periodic Reeb orbits}.

\section{The proof of Corollary \ref{cor:main}}
Since the torus $L \subset S^3 \subset B^4$ is Lagrangian, it is foliated by integral curves of the characteristic foliation
\[\ker \omega_0 |_{TS^3} \subset TS^3 \subset TB^4.\]
These integral curves are the periodic Reeb orbits
\[S^1=\R/2\pi\Z \ni \theta \mapsto e^{i\theta}\mathbf{z} \in S^3, \:\: \mathbf{z} \in S^3 \subset \C^2,\]
of the standard contact form $\alpha_{\OP{std}}:=\frac{1}{2}\sum_{i=1}^2(x_idy_i-y_idx_i)$ on $S^3$. The foliation of the sphere by these Reeb orbits induces the Hopf fibration
\begin{gather*}
p \colon S^3 \to \CP^1,\\
(z_1,z_2) \mapsto [z_1:z_2],
\end{gather*}
with the above periodic Reeb orbits as $S^1$-fibres, while the base is given by the orbit space diffeomorphic to $\CP^1$. Recall that the latter space is endowed with the canonical symplectic form $\omega_{\OP{FS},1}$ obtained from the corresponding symplectic reduction $(\C^2,\omega_0) \supset S^3 \xrightarrow{p} (\CP^1,\omega_{\OP{FS},1})$.

Since $L$ is foliated by periodic Reeb orbits of the same period, the Reeb flow on $S^3$ (i.e.~multiplication by $e^{i2t}$) induces a smooth and free $S^1$-action on $L$. In other words, we may consider $L$ as an $S^1$-bundle $q \colon L \to L/S^1 \cong S^1$, with base given as the quotient of $L$ by this action. By topological reasons this must be a trivial $S^1$-bundle over $S^1$.

In particular, for any choice of section $\sigma \colon L/S^1 \to L$ of $q$, the projection $p \circ \sigma$ is an embedded closed curve $\gamma := p \circ \sigma \colon S^1 \hookrightarrow (\CP^1,\omega_{\OP{FS},1})$. Using the assumption that $L$ is extremal, we conclude that the two smooth discs $\CP^1 \setminus \gamma(S^1)$ each must be of symplectic area equal to $\pi/2$. Otherwise, one could readily lift the disc of smaller area to a disc inside $S^3 \to \CP^1$ having boundary on $L$ and being of the same area, thus contradicting the assumption that $L$ is extremal.

Using elementary methods one can construct a Hamiltonian isotopy
\[\phi^s_{H_s} \colon (\CP^1,\omega_{\OP{FS,1}}) \to (\CP^1,\omega_{\OP{FS,1}}),\]
induced by the time-dependent Hamiltonian $H_s \colon \CP^1 \to \R$, taking the curve $\gamma$ to the equator
\begin{gather*}
\gamma_0 \colon S^1 =\R/2\pi\Z \to \CP^1,\\
\theta \mapsto p(e^{i\theta/2},e^{-i\theta/2}).
\end{gather*}
Observe that $p^{-1}(\gamma_0)=S^1_{1/\sqrt{2}} \times S^1_{1/\sqrt{2}} \subset S^3$ is the sought Clifford torus.

The Hamiltonian isotopy $\phi^s_{H_s}$ lifts to a \emph{contact-form preserving} isotopy $\phi^s \colon (S^3,\alpha_{\OP{std}}) \to (S^3,\alpha_{\OP{std}})$ induced by the contact Hamiltonian $H_s \circ p \colon S^3 \to \R$, i.e.~$p\circ \phi^s=\phi^s_{H_s}$.

Since the contactomorphisms $\phi^s$ preserve the contact form $\alpha_{\OP{std}}$ it follows that
\begin{gather*}
(\R \times S^3,d(e^t\alpha_{\OP{std}})) \to (\R \times S^3,d(e^t\alpha_{\OP{std}})), \\
(t,y) \mapsto (t,\phi^s(y)),
\end{gather*}
defines a one-parameter family of symplectomorphisms which, moreover, is generated by the time-dependent Hamiltonian of the form $e^t H_s \circ p \colon \R \times S^3 \to \R$.

Utilising the symplectic identification
\begin{gather*}
(\C^2 \setminus \{ 0\},\omega_0) \to (\R \times S^3,d(e^t\alpha_{\OP{std}})), \\
\mathbf{z} \mapsto (2\log \|\mathbf{z}\|,\mathbf{z}/\|\mathbf{z}\|),
\end{gather*}
we obtain a time-dependent Hamiltonian
\begin{gather*}
\widetilde{H}_s \colon \C^2 \setminus \{0\} \to \R, \\
\mathbf{z} \mapsto \|\mathbf{z}\|^2 \cdot H_s \circ p(\mathbf{z}),
\end{gather*}
whose induced flow on $\C^2 \setminus \{0\}$ preserves each concentric sphere $S^3_r$, $r>0$, while it satisfies the property that $p \circ \phi^s_{\widetilde{H}_s} = \phi^s_{H_s}$. After a smoothing of $\widetilde{H}_s$ in a small neighbourhood of the origin $0 \in \C^2$, we have finally produced our sought Hamiltonian isotopy of $(D^4,\omega_0)$.

\section{The proof of Theorem \ref{thm:main}}
\label{sec:proof}
Our proof follows from the techniques in \cite{PuncturedHolomorphic} that are used to prove part \eqref{1} of Theorem \ref{thm:cap}, combined with the automatic transversality result from \cite{Auttrans}. Observe that the latter theory only is applicable to four-dimensional symplectic manifolds.

By contradiction we assume that $\varphi \colon \T^2 \hookrightarrow (D^4,\omega_0)$ is a fixed Lagrangian torus $L$ which is extremal, but not contained entirely in the boundary $S^3 =\partial D^4$. Here we make the identification $\T^2=S^1 \times S^1=\R/2\pi\Z \times \R / 2\pi\Z$, with the induced circle-valued coordinates $\theta_1,\theta_2$.

We start by fixing a so-called Weinstein neighbourhood of $L$ (see e.g.~\cite{SympTop}), which is an extension of $\varphi$ to a symplectomorphism
\begin{gather*}
\Phi \colon (T^*_{3\epsilon_0}\T^2,d\lambda_{\T^2}) \to (\C^2,\omega_0),\\
\Phi|_{\T^2}=\varphi,
\end{gather*}
for some fixed $\epsilon_0 >0$. Here $T^*_{s}\T^2:=\T^2 \times B^2_s \subset \T^2 \times \R^2$ is the co-disc bundle of radius $s>0$ for the flat torus, and where the Liouville one-form is given by $\lambda_{\T^2}=p_1d\theta_1+p_2d\theta_2$ for the standard coordinates $(p_1,p_2)$ on the $\R^2$-factor. Writing $S^*_s\T^2 := \T^2 \times S^1_s \subset \T^2 \times \R^2$ for the corresponding co-sphere bundle, we will once and for all fix a single fibre $F_p$ of $S^*_{2\epsilon_0}\T^2$ above a point $p \in \T^2 \subset \Phi^{-1}(B^4)$. After making an appropriate choice of $p \in \T^2$, and possibly after replacing $\epsilon_0>0$ with a sufficiently small number, we may assume that
\[S_0:= \Phi(F_p) \subset B^4 \setminus L = D^4 \setminus (\partial D^4 \cup L).\]
Here we have obviously used the assumption that $L$ is not contained entirely inside $\partial D^4$. See Figure \ref{fig:fibre} for a schematic picture.

\begin{figure}[htp]
\begin{center}
\vspace{3mm}
\labellist
\pinlabel $T^*_\epsilon L$ at 51 38
\pinlabel $U_0$ at 49 90
\pinlabel $\color{red}S_0$ at 69 87
\pinlabel $L$ at 106 75
\pinlabel $B^4_{1+\delta}$ at 144 21
\pinlabel $p$ at 90 51
\pinlabel $B^4$ at 74 20
\endlabellist
\includegraphics{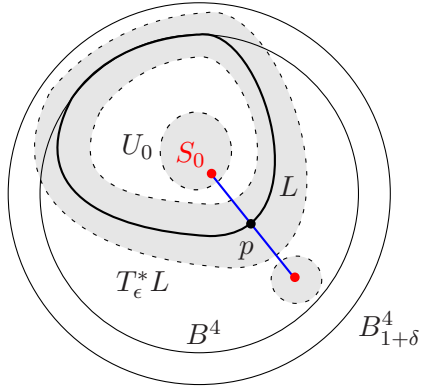}
\caption{The embedded circle $S_0 \subset B^4$ being the image of a fibre $F_p \subset S^*_{2\epsilon_0}L$ over $p \in L$ and which non-trivially links the Lagrangian torus $L$. The co-disc bundle $T^*_\epsilon L$ is disjoint from the neighbourhood $U_0$ of this fibre, and is contained inside $B^4_{1+\delta}$, given that $0<\epsilon<\epsilon_0$ is sufficiently small.}
\label{fig:fibre}
\end{center}
\end{figure}

The following property will be crucial to us. We fix an open neighbourhood $U_0$ of $S_0$ whose closure satisfies
\[ \overline{U_0} \subset B^4 \setminus \Phi(T^*_{\epsilon_0}\T^2). \]
We also fix a compatible almost complex structure $J_{U_0}$ on $(U_0,\omega_0)$ which can be extended to a compatible almost complex structure on $(\C^2,\omega_0)$. The monotonicity property for pseudoholomorphic curves \cite[Proposition 4.3.1(ii)]{SomeProp} implies that
\begin{enumerate}[label=(M), ref=(M)]
\item \label{M} There exists a constant $\hbar>0$ for which the following holds: Any proper $J_{U_0}$-holomorphic curve $C \subset U_0$ satisfying $C \cap S_0 \neq \emptyset$ has symplectic area bounded from below by $\int_C \omega_0 \ge \hbar$.
\end{enumerate}

\subsection{The proof of part \eqref{1} of Theorem \ref{thm:cap} in dimension four}
We here reprove \eqref{1} in the four-dimensional case, which originally was established in \cite{PuncturedHolomorphic} by Cieliebak-Mohnke in arbitrary dimensions. We follow exactly the same ideas, with the only difference that, instead of applying \cite[Corollary 3.3]{PuncturedHolomorphic} which holds in the nondegenerate setting, we adapt this argument to a setting which is degenerate in the Morse-Bott sense. Theorem \ref{thm:main} will then be seen to follow after a more careful investigation of the involved pseudoholomorphic curves; this is done in Section \ref{sec:further} below.

For each $\delta>0$, we consider the symplectic embeddings
\[(B^4,\omega_0) \subset (B^4_{1+\delta},\omega_0) = (\CP^2 \setminus D_\infty,\omega_{\OP{FS},1+\delta})\subset (\CP^2,\omega_{\OP{FS},1+\delta}),\]
together with a number $0<\epsilon<\epsilon_0$ depending on $\delta>0$ for which $\Phi(T^*_{\epsilon}\T^2) \subset B^4_{1+\delta/2}$.

\subsubsection{The Morse-Bott contact form on $S^*_1L$}
First we define the tame almost complex structure $J_0$ on $T^*\T^2$ determined by
\[J_0(\partial_{\theta_i})=-\rho(\| \mathbf{p}\|)\partial_{p_i}, \:\:i=1,2,\]
where $\rho \colon \R_{\ge 0} \to \R_{\ge 0}$ satisfies $\rho'(t) \ge 0$, $\rho(t) \equiv \epsilon/4$ for $t \le \epsilon/4$, and $\rho(t)=t$ for $t \ge \epsilon/3$. We also define the tame almost complex structure $J_{\OP{cyl}}$ on $T^*\T^2 \setminus 0_{\T^2}$ determined by
\[ J_{\OP{cyl}}(\partial_{\theta_i})=-\| \mathbf{p}\|\partial_{p_i}, \:\:i=1,2,\]
which hence coincides with $J_0$ in the subset $\{ \|\mathbf{p}\| \ge \epsilon/3\}$. Note that the latter almost complex structure is \emph{cylindrical} in the sense of \cite{CompSFT}, given that we use the exact symplectic identification of $(T^*\T^2 \setminus 0_{\T^2},d\lambda_{\T^2})$ with the symplectisation
\[ (\R \times S^*_1\T^2,d(e^t\alpha_0)), \:\: \alpha_0:=\lambda_{\T^2}|_{T(S^*_1\T^2)},\]
which sends the level set $\{\| \mathbf{p}\|=c\}$ to the level set $\{ t= \log c\}$.

We observe that the Reeb flow on $(S^*_1L,\alpha_0)$ coincides with the so-called cogeodesic flow for the canonical flat metric on $L \cong \T^2=(\R/2\pi\Z)^2$. In particular, it follows that the periodic Reeb orbits come in manifolds diffeomorphic to $S^1$: there is one such family $\Gamma_\eta \simeq S^1$ of periodic Reeb orbits for each non-zero homology class $\eta \in H_1(L) \setminus \{0\}$. These manifolds of orbits are moreover non-degenerate in the Morse-Bott sense; see \cite{BourgeoisBott} for more details.

\subsubsection{A sequence of almost complex structures stretching the neck}
\label{sec:neckstretch}

Consider a sequence of almost complex structures $J^\tau$, $\tau \ge 0$, which satisfy the following properties:
\begin{itemize}
\item Inside $\CP^2 \setminus \Phi(T^*_{\epsilon/2}\T^2)$, the family $J^\tau$ is independent of $\tau$. Furthermore, we require that:
\begin{itemize}
\item In the subset $U_0 \subset B^4 \setminus \Phi(T^*_{\epsilon}\T^2)$ we have $J^\tau|_{U_0}=J_{U_0}$.
\item Inside $\CP^2 \setminus B^4_{1+\delta/2}$ we have $J^\tau=i$, where the $i$ denotes the standard integrable complex structure on $(\CP^2,\omega_{\OP{FS},1+\delta})$. (In particular, the line $D_\infty$ at infinity is $J^\tau$-holomorphic for each $\tau \ge 0$.)
\end{itemize}
\item In the subset $\Phi(T^*_{\epsilon/3}\T^2)$, each $J^\tau$ is the push-forward of $J_0$ under the map $\Phi$.
\item In the subset $\Phi(T^*_{\epsilon/2}\T^2) \setminus \Phi(T^*_{\epsilon/3}\T^2)$, identified with
\[ ([\log{\epsilon/3},\log{\epsilon/2}) \times S^*_1\T^2,d(e^t\alpha_0)),\]
the almost complex structure $J^\tau$, $\tau \ge 0$, is the push-forward of $J_{\OP{cyl}}$ under the (non-symplectic!) identification
\[ [\log{\epsilon/3},\log{\epsilon/2}+\tau) \times S^*_1\T^2 \cong [\log{\epsilon/3},\log{\epsilon/2}) \times S^*_1\T^2,\]
induced by a diffeomorphism $[\log{\epsilon/3},\log{\epsilon/2}) \cong [\log{\epsilon/3},\log{\epsilon/2}+\tau)$.
\end{itemize}
The above sequence $J^\tau$, $\tau \ge 0$, of tame almost complex structures is said to ``stretch the neck'' around the hypersurface
\[\Phi(S^*_{\epsilon/2}\T^2) \subset (\CP^2,\omega_{\OP{FS},1+\delta})\]
of contact type as $\tau \to +\infty$, where this hypersurface has been endowed with the contact form $\alpha_0$ described above.

We also need to specify the tame almost complex structure $J^\infty$ on the symplectic manifold $\CP^2 \setminus L$ with a concave cylindrical end, which is determined by
\begin{itemize}
\item $J^\infty=J^\tau=J^0$ inside $\CP^2 \setminus \Phi(T^*_{\epsilon/2}\T^2)$; and
\item $J^\infty$ is the push-forward of $J_{\OP{cyl}}$ under $\Phi$ in the subset $\Phi(T^*_{\epsilon/2}\T^2 \setminus 0_{\T^2}) \subset \CP^2 \setminus L$.
\end{itemize}
Again, observe that the line at infinity $D_\infty \subset \CP^2$ is $J^\infty$-holomorphic by construction.

\subsubsection{The existence of pseudoholomorphic buildings of degree one}
The following existence result for pseudo-holomorphic curves in $(\CP^2,\omega_{\OP{FS},r})$ due to Gromov is crucial.
\begin{thm}[\cite{Gromov}]
\label{thm:gromov}
Let $J$ be an arbitrary tame almost complex structure on $(\CP^2,\omega_{\OP{FS},r})$. There exists a unique $J$-holomorphic sphere of degree one which satisfies either of the following:
\begin{itemize}
\item A point constraint at two different points $p,q \in \CP^2$; or
\item A tangency condition to $\C\mathbf{v} \subset T_p\CP^2$ at a given point $p \in \CP^2$, where $\mathbf{v} \neq 0$.
\end{itemize}
Furthermore, this sphere is embedded, has Fredholm index 4, and is transversely cut out.
\end{thm}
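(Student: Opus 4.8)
The plan is to combine the four-dimensional pseudoholomorphic curve technology already cited in the paper — positivity of intersection \cite{LocalCurve} and automatic transversality \cite{Auttrans} — with the classical picture for the integrable structure $i$. I would work throughout with the homology class $A = [\ell] \in H_2(\CP^2;\Z)$ of a line, which is the positive generator; in particular $A$ is primitive, so any $J$-holomorphic sphere in class $A$ is automatically simple, since a $d$-fold cover would represent $dA$ and force $d = 1$. This lets me apply adjunction, automatic transversality, and positivity of intersection to every curve under consideration.

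First I would record the numerics. Since $\langle c_1(\CP^2), A\rangle = 3$ and the reparametrisation group $\mathrm{PSL}(2,\C)$ has real dimension $6$, the unparametrised moduli space of degree-one spheres has expected dimension $2\chi(S^2) + 2\langle c_1, A\rangle - 6 = 4 + 6 - 6 = 4$, which is the asserted Fredholm index. A point constraint is of real codimension two and a tangency constraint of real codimension four, so imposing either the two-point or the tangency condition cuts the expected dimension down to zero. Automatic transversality in the form of \cite{Auttrans} applies because $\langle c_1, A\rangle = 3 > -2 = 2g - 2$ for $g = 0$; hence for every tame $J$ the moduli space is a smooth manifold cut out transversely of the expected dimension. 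The adjunction formula $\langle c_1, A\rangle = 2 - 2g + A\cdot A - 2\delta$ reads $3 = 2 + 1 - 2\delta$, forcing the number of double points $\delta$ to vanish, so every such sphere is automatically embedded.

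For uniqueness I would invoke positivity of intersection \cite{LocalCurve}. Two distinct degree-one curves have homological intersection $A \cdot A = 1$, so they meet in exactly one point, transversally. Consequently two distinct lines cannot share two points, as they would then meet at least twice, nor can two distinct lines be tangent at a common point, since a tangency contributes local intersection at least two; this yields at most one curve satisfying either constraint.

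It remains to prove existence, which I expect to be the main point requiring care. Because $A$ is of minimal positive degree, no bubbling or nodal degeneration can occur: in any limiting stable map the degrees of the non-constant components are positive integers summing to $1$, so there is a single non-constant component of degree one. Hence the moduli space is compact, and by automatic transversality it is a closed smooth manifold depending on $J$ only up to cobordism as $J$ varies in the contractible space of tame almost complex structures. The constraint maps, namely evaluation at two marked points and the one-jet evaluation encoding the tangency, are therefore proper of a degree independent of $J$, and this degree is computed at $J = i$, where classical projective geometry gives exactly one line through two distinct points and exactly one line tangent to a prescribed direction. Since the degree equals $\pm 1$, the constraint map is surjective for every tame $J$, which together with the uniqueness above produces the required unique embedded sphere. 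The delicate step throughout is the cobordism-invariance of this count, which rests precisely on the twin facts that degree one forbids bubbling and that $c_1$-positivity forces transversality with no genericity assumption on $J$.
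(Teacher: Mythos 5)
The paper does not prove this statement at all: it is imported as a black box from Gromov \cite{Gromov} (a textbook account is \cite{JHolCurves}), so there is no internal argument to measure yours against. What can be said is that your proposal reconstructs the standard proof of this classical fact, and it is essentially correct: primitivity of the class gives simplicity, adjunction gives embeddedness, positivity of intersection \cite{LocalCurve} together with $A \cdot A = 1$ gives uniqueness for both constraints (two shared points or a tangency would force local intersection number at least two), minimality of the class rules out bubbling and gives compactness, and the degree-theoretic comparison with the integrable structure $i$ gives existence. Two points deserve tightening. First, the automatic transversality criterion requires the curve to be immersed (Wendl's theorem \cite{Auttrans}, stated for punctured curves, carries a correction term counting critical points of $du$; for closed curves the result goes back to Gromov and Hofer--Lizan--Sikorav), and the inequality should read $\operatorname{ind} = 4 > 2g-2 = -2$ rather than comparing $c_1$ with $2g-2$; so the logical order must be: simple, hence adjunction applies, hence embedded and immersed, hence automatically regular --- not transversality first and adjunction second, as written. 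Second, automatic transversality makes the \emph{unconstrained} moduli space smooth, but it does not by itself make the two-point or one-jet evaluation maps transverse to the constraints; fortunately your argument does not need that, since the degree of a smooth proper map between oriented manifolds of equal dimension is defined and homotopy-invariant without any transversality of the map --- provided you verify properness of the evaluation map over the constraint locus (e.g.\ over the complement of the diagonal in $\CP^2 \times \CP^2$, where collision of the two marked points is excluded because their images converge to the distinct points $p$ and $q$). With these repairs the argument is complete, and it is the proof Gromov's citation stands in for.
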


Consider a sequence of $J^{\tau_i}$-holomorphic spheres $\ell_{\tau_i} \subset \CP^2$ of degree one, for which $\lim_{i \to \infty} \tau_i=\infty$, and where we have used a neck-stretching sequence of tame almost complex structures as described in Section \ref{sec:neckstretch}. Gromov's compactness theorem extended to the SFT setting, more precisely \cite[Theorem 10.3]{CompSFT}, or alternatively \cite[Theorem 1.2]{CompactnessPunctured}, now gives the following. After passing to a subsequence, this sequence of parametrised spheres converges to a ``pseudoholomorphic building'', where we refer to the latter papers for a description of the relevant topology. By a pseudoholomorphic building, we mean a collection of parametrised punctured spheres of the following form:
\begin{itemize}
\item A {\bf top level} consisting of a finite number of punctured $J^\infty$-holomorphic spheres $A_1,\hdots,A_k \subset \CP^2 \setminus L$;
\item A (possibly zero) number of {\bf middle levels} consisting of a finite number of punctured $J_{\OP{cyl}}$-holomorphic spheres $B_1,\hdots,B_l \subset \R \times S^*_1L$; and
\item A (possibly empty) {\bf bottom level} consisting of a finite number of punctured $J_0$-holomorphic spheres $C_1, \hdots, C_m \subset T^*L$.
\end{itemize}
All of the above punctured spheres are of finite energy, and the asymptotic orbits match in order for the different levels to topologically glue to form a cycle in $\CP^2$ which is a sphere of degree one. Again, we refer to the papers above for more details.

Note that we often abuse notation and suppress the parametrisation from the notation of a pseudoholomorphic curve, as well as from the notation of a component involved in a pseudoholomorphic building.

A one-punctured pseudoholomorphic sphere will be referred to as a {\bf pseudoholomorphic plane}, while a two-punctured pseudoholomorphic sphere will be referred to as a {\bf pseudoholomorphic cylinder}. Observe that the middle levels contain pseudoholomorphic cylinders of the form $\R \times \gamma \subset \R \times S^*_1L$, where $\gamma \subset (S^*_1L,\alpha_0)$ is a periodic Reeb orbit. The latter cylinders will be called {\bf trivial cylinders}. By the SFT compactness theorem, every non-empty middle level must contain at least one punctured sphere which is not a trivial cylinder.

We also state the following simple, but useful, lemma.
\begin{lem}
\label{lem:noplanes}
For the almost complex structures $J_0$ and $J_{\OP{cyl}}$ on $T^*L$ and $\R \times S^*_1L$, respectively, there are no non-constant pseudoholomorphic planes of finite energy.
\end{lem}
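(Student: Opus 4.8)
The plan is to show that a non-constant finite energy plane would necessarily be asymptotic, at its puncture, to a closed Reeb orbit of $(S^*_1L,\alpha_0)$, and that such an orbit represents a nonzero class in $\pi_1(L)$. This is incompatible with the contractibility of the plane's domain, which will produce the contradiction. First I recall the relevant asymptotic structure. Both $(T^*L,J_0)$ and $(\R \times S^*_1L,J_{\OP{cyl}})$ are cylindrical near the end $\|\mathbf{p}\| \to +\infty$ (respectively $t \to +\infty$), modelled on the symplectisation of $(S^*_1L,\alpha_0)$. A finite energy plane $u$ has a single puncture, and by the asymptotic analysis underlying the SFT compactness theorem \cite{CompSFT} (see also \cite{CompactnessPunctured}) the map $u$ converges, as one approaches the puncture, to a trivial cylinder over some closed Reeb orbit $\gamma \subset (S^*_1L,\alpha_0)$; a removable puncture would extend $u$ to a non-constant holomorphic sphere of zero area, which is impossible, so $\gamma$ is a genuine periodic orbit. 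This convergence statement remains valid in the present Morse-Bott degenerate situation, where $\gamma$ lies in one of the $S^1$-families $\Gamma_\eta$; note that we do not even need to determine the sign of the puncture.

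Next I use the geometry of the flat torus. As already observed, the Reeb flow on $(S^*_1L,\alpha_0)$ is the cogeodesic flow of the canonical flat metric on $L \cong \T^2$, so under the bundle projection every closed Reeb orbit maps to a closed geodesic of $L$; these are exactly the straight closed curves, and there is one family $\Gamma_\eta$ for each $\eta \in H_1(L) \setminus \{0\}$. The decisive point is that a flat torus carries no contractible closed geodesic. Consequently the asymptotic orbit $\gamma$ of $u$, possibly a $k$-fold cover of a simple orbit, projects to a loop representing $k\eta \ne 0$ in $\pi_1(L) = H_1(L)$, since $H_1(L)$ is torsion-free and $\eta \ne 0$.

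I then derive the contradiction. Let $\Pi \colon T^*L \to L$, and in the symplectisation case let $\Pi \colon \R \times S^*_1L \to S^*_1L \to L$ denote the composite projection to the base, and set $v := \Pi \circ u \colon \C \to L$. Writing the puncture at infinity in cylindrical coordinates on the domain, the asymptotic convergence of $u$ to the trivial cylinder over $\gamma$ shows that for $R$ sufficiently large the loop $v|_{\{|z| = R\}}$ is freely homotopic in $L$ to $\Pi(\gamma)$, hence represents the class $k\eta \ne 0$. On the other hand this loop bounds the disc $\{|z| \le R\} \subset \C$, so $v$ restricted to that disc is a null-homotopy of it in $L$; therefore $k\eta = 0$ in $\pi_1(L)$, a contradiction. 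Hence no non-constant finite energy plane exists for either $J_0$ or $J_{\OP{cyl}}$.

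The step I expect to be the main obstacle is the passage from the analytic asymptotics of $u$ to the clean homotopical statement that the loop at the puncture represents the homology class of the asymptotic Reeb orbit. This rests on the exponential $C^\infty$-convergence of finite energy curves to their asymptotic cylinders, and in the Morse-Bott setting one must invoke the corresponding refined asymptotic results for orbits appearing in nondegenerate $S^1$-families. Once this is in hand, the remaining ingredients are entirely elementary: the absence of contractible closed geodesics on a flat torus and the contractibility of the domain $\C$.
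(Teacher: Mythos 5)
Your proof is correct and follows essentially the same route as the paper's (much terser) argument: the paper compactifies the plane to produce a null-homology of the closed geodesic underlying the asymptotic Reeb orbit, which is impossible since no closed geodesic of the flat metric on $\T^2$ is null-homologous, while you project to $L$ and phrase the same contradiction as a null-homotopy. The extra care you take with the asymptotics at the puncture (Morse-Bott convergence, exclusion of a removable puncture) is exactly what the paper's phrase ``an appropriate compactification'' is implicitly invoking.
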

\begin{proof}
There are no contractible geodesics on $L$ for the flat metric, and an appropriate compactification of such a plane would produce a null-homology of the geodesic to which it is asymptotic.
\end{proof}

\subsubsection{The heart of the proof: producing a pseudoholomorphic building containing three planes}

We are now ready to state the main result in this subsection, from which part \eqref{1} of Theorem \ref{thm:cap} can be seen to follow. We follow the method of \cite[Section 4]{PuncturedHolomorphic}, but applied in the current Morse-Bott setting. Pick a generic tangency condition $\C\mathbf{v} \subset T_p \CP^2$ for $p \in L$, and consider the sequence of $J^\tau$-holomorphic spheres of degree one satisfying this condition with $\tau \to +\infty$ (they exist by Gromov's Theorem \ref{thm:gromov}). After passing to a subsequence, the SFT compactness result produces a limit pseudoholomorphic building containing one component $C_0 \subset T^*L$ passing through the point $p \in L$ and satisfying the same tangency, when considered as a parametrised curve. We observe that, by definition, any given tangency condition is satisfied at a singular point of a parametrised pseudoholomorphic curve (e.g.~the image of a branched point).
\begin{prop}
\label{prop:three}
After a perturbation of $J_0$ inside an arbitrarily small neighbourhood of $p \in T^*L$, the limit pseudoholomorphic building produced above consists of at least two $J^\infty$-holomorphic planes $A_1,A_2 \subset \CP^2 \setminus L$ that are disjoint from the line at infinity $D_\infty$. Moreover, the planes $A_1$ and $A_2$ are connected to the unique component $A_\infty \subset \CP^2 \setminus L$ of the building passing through $D_\infty$ via the component $C_0 \subset T^*L$. See Figure \ref{fig:building} for a schematic picture.
\end{prop}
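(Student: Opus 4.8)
The plan is to read off the combinatorial structure of the limiting building from three ingredients: positivity of intersection against the $J^\infty$-holomorphic divisor $D_\infty$, the nonexistence of finite-energy planes in the neck and bottom levels furnished by Lemma \ref{lem:noplanes}, and a Maslov/Chern number count that is rendered sharp by the automatic transversality of \cite{Auttrans}, available since we work in real dimension four. Before carrying this out, I would perturb $J_0$ inside a small ball around $p$, supported near the zero section in the region where $J_0$ is non-cylindrical and hence away from the neck $\Phi(T^*_{\epsilon/2}\T^2)$, so that the distinguished bottom component $C_0$ carrying the tangency becomes regular. For a generic tangency direction $C_0$ is somewhere injective, so in the four-dimensional setting Wendl's criterion yields regularity after an arbitrarily small such perturbation, which leaves the contact form, the Reeb dynamics, and the compactness argument of the previous subsection unaffected.

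I would first locate the component meeting infinity. The building glues to a cycle in the class $[\CP^1]\in H_2(\CP^2)$, and $D_\infty$ is $J^\infty$-holomorphic with $[\CP^1]\cdot[D_\infty]=1$. The divisor $D_\infty$ is seen only in the top level $\CP^2\setminus L$, the neck and bottom levels arising from the region $\Phi(T^*_{\epsilon/2}\T^2)\subset\CP^2\setminus D_\infty$; hence only top-level components can meet $D_\infty$. Positivity of intersection \cite{LocalCurve} then forces exactly one top-level component, namely $A_\infty$, to meet $D_\infty$, transversally and in a single point, while every other component of the building is disjoint from $D_\infty$. In particular $A_\infty$ is unique.

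The heart of the argument is then a Maslov index bookkeeping. Topologically cutting the degree-one sphere along $L$ produces discs with boundary on $L$ whose Maslov indices sum to $2\langle c_1(\CP^2),[\CP^1]\rangle=6$. Rigidity enters at this point: the tangency condition at $p$ reduces the virtual dimension of the constrained moduli space to zero by Gromov's Theorem \ref{thm:gromov}, and, together with the regularity arranged above and the Morse-Bott index contributions of the orbit families $\Gamma_\eta$ computed as in \cite{BourgeoisBott}, this forces each of these discs to realize the minimal Maslov index two and forbids any index being spent on non-rigid or multiply covered components. There are therefore exactly three such discs. One of them, the piece containing $A_\infty$, carries its entire Maslov index through the single intersection with $D_\infty$, while the remaining two are disjoint from the divisor. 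Since Lemma \ref{lem:noplanes} prevents any middle- or bottom-level component from being a plane, the outermost piece of each of these two remaining discs must be a top-level plane, and this produces the two planes $A_1,A_2\subset\CP^2\setminus L$ disjoint from $D_\infty$.

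It remains to record the connectivity. The glued building is the irreducible degree-one sphere and hence connected, so its level graph is connected; reading this graph off, the planes $A_1$, $A_2$ and the component $A_\infty$ are joined to the bottom piece $C_0$ through $p$ by chains of matching asymptotic Reeb orbits across the (possibly empty) middle levels, which is exactly the asserted picture. I expect the main obstacle to be the index computation of the previous paragraph within the Morse-Bott framework: one must keep careful track of the $S^1$-families $\Gamma_\eta$ of Reeb orbits and of the perturbation turning the degenerate asymptotics into nondegenerate ones, in order to certify that the virtual dimension is indeed zero and that automatic transversality excludes all degenerate limits, so that precisely the two planes $A_1,A_2$ survive in the limit.
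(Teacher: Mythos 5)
Your opening step (positivity of intersection against $D_\infty$, giving a unique top-level component $A_\infty$ meeting the divisor) and your closing step (Lemma \ref{lem:noplanes} forcing every outermost piece of the building to be a top-level plane) coincide with the paper's. The heart of your argument, however, is a global Maslov-index bookkeeping, and this is where there are genuine gaps. First, the claim that ``for a generic tangency direction $C_0$ is somewhere injective'' is not justified and cannot be arranged: by definition a tangency condition is regarded as satisfied at any singular point of a parametrised curve (e.g.\ the image of a branch point), so the limit component through $p$ may perfectly well be a multiple cover of a simple curve, branched at $p$, and no choice of the direction $\C\mathbf{v}$ excludes this. The paper keeps this case alive and disposes of it separately: a cover of a cylinder branched at $p$ has at least three punctures by Riemann--Hurwitz, which is all that is needed.

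Second, the assertion that every disc piece of the building ``realizes the minimal Maslov index two'' is precisely what one cannot assume here: the extremal torus in question is not known to be monotone, and for a non-monotone torus this kind of Maslov positivity is essentially the content of the hard theorems being invoked, not an input. To force it one would need index nonnegativity for the simple components in \emph{all} levels, which requires generic perturbations of $J^\infty$ and $J_{\OP{cyl}}$ (your perturbation of $J_0$ near $p$ controls nothing in the top and middle levels), together with an exclusion of negative-index multiply covered components --- something no perturbation of the almost complex structure can provide; your phrase ``forbids any index being spent on non-rigid or multiply covered components'' asserts exactly the crux. The paper avoids all of this with a purely local argument in the bottom level: cylinders in $T^*L$ have Fredholm index $2$ (a Morse--Bott Robbin--Salamon computation), while the point-plus-tangency constraint has codimension $4$, so after a perturbation of $J_0$ near $p$ no simple cylinder can carry the constraint; combined with the branched-cover case above, this shows that $C_0$ has at least three punctures, and the proposition then follows from Lemma \ref{lem:noplanes} and the tree structure of the building alone. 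Note finally that even if your count of ``exactly three discs'' were granted, it would not yield the stated conclusion that $A_1$ and $A_2$ are joined to $A_\infty$ \emph{via} $C_0$: that requires precisely the missing statement that $C_0$ itself has at least three punctures, whereas three disc pieces are combinatorially compatible with $C_0$ being a cylinder.
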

\begin{figure}[htp]
\begin{center}
\vspace{3mm}
\labellist
\pinlabel $\CP^2\setminus L$ at -25 58
\pinlabel $T^*L$ at -14 20
\pinlabel $\color{blue}D_\infty$ at 32 82
\pinlabel $\color{blue}D_\infty$ at 220 82
\pinlabel $A_\infty$ at 51 68
\pinlabel $A_1$ at 82 68
\pinlabel $A_2$ at 123 68
\pinlabel $C_0$ at 125 10
\pinlabel $p$ at 64 9
\pinlabel $p$ at 188 9
\pinlabel $\mathbf{v}$ at 78 21
\pinlabel $\mathbf{v}$ at 202 21
\endlabellist
\includegraphics{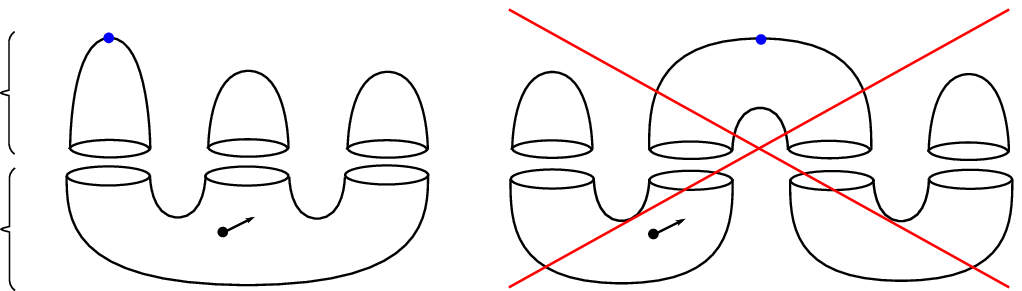}
\caption{Proposition \ref{prop:three} produces a pseudoholomorphic building as shown on the left. For a general Lagrangian torus $L$ it is possible that the planes $A_1$, $A_2$ are connected to $C_0$ via intermediate components, and that $A_\infty$ itself is a building (i.e.~a broken plane).}
\label{fig:building}
\end{center}
\end{figure}
\begin{proof}
By Lemma \ref{lem:noplanes} the component $C_0 \subset T^*L$ passing through $p \in L$ has at least two punctures. We will show that, in fact, it must have at least three punctures. The proposition readily follows from the latter statement. Namely, first observe that precisely one component in $\CP^2 \setminus L$ of the building intersects $D_\infty$ by positivity of intersection together with $[D_\infty] \bullet [D_\infty]=1$ (recall that $D_\infty$ is $J^\infty$-holomorphic by assumption). Using Lemma \ref{lem:noplanes} together with a topological consideration one now deduces the existence of the two sought planes. What remains is thus to prove that $C_0$ has at least three punctures.

A pseudoholomorphic cylinder $C \subset T^*L$ has Fredholm index given by
\[ \OP{index}(C)= (\OP{RS}(\Gamma_1)+1/2)+(\OP{RS}(\Gamma_2)+1/2)\]
as follows from \cite[Formula (3)]{PuncturedHolomorphic}. Here $\Gamma_1$ and $\Gamma_2$ are the families of periodic Reeb orbits containing the asymptotic orbits of $C$, and $\OP{RS}(\Gamma_i)$ denotes the Robbin-Salamon index defined in \cite[Remark 5.4]{MaslovIndex}. Here the latter index has been computed using the canonical trivialisation of the complex determinant bundle of $T(T^*L)$. In this setting, the Robbin-Salamon index can be related to the Morse index $\iota_\mu$ and nullity $\iota_\nu$ of the corresponding geodesics on $L$ by the formula
\[ \OP{RS}(\Gamma_i) = \iota_\mu+(1/2)\iota_\nu=1/2\] 
in \cite[Equation 60]{CieFra}. Here we have used the fact that $\i_\mu=0$ and $i_\nu=0$ for all closed geodesics on the flat $\T^2$; they are all of minimal length in their homology class, and come in one-dimensional families. In conclusion, we have shown that
\[ \OP{index}(C)= 2.\]

After a perturbation of $J_0$ supported in an arbitrarily small neighbourhood of $p \in L$, we may assume that every simply covered pseudoholomorphic cylinder in $T^*L$ is transversely cut out by a standard transversality argument; see \cite[Section 3.4]{JHolCurves}. It thus follows that the moduli space of unparametrised $J_0$-holomorphic cylinders is a two-dimensional manifold.

In particular, it follows that no simply covered cylinder satisfies the tangency condition $\C\mathbf{v} \subset T_pL$ after a generic perturbation of $J_0$, given that the point $p \in L$ and the tangency condition both were chosen generically.

There are now two possibilities for the limit pseudoholomorphic building produced above:
\begin{itemize}
\item The component $C_0$ of the limit building is a multiple cover of a $J_0$-holomorphic cylinder in $T^*L$ branched at $p$; or
\item The component $C_0$ of the limit building has a non-zero tangency to $\C\mathbf{v}$.
\end{itemize}
In either of the two cases, the component $C_0$ can be seen to have at least three punctures.
\end{proof}
\begin{rem} There are two alternative approaches to proving Proposition \ref{prop:three} which do not require a perturbation of the almost complex structure $J_0$ on $T^*L$:
\begin{enumerate}
\item The space of simple $J_0$-holomorphic cylinders can be described explicitly and moreover seen to come in a two-dimensional family (as expected); see \cite{LagIsoTori};
\item Since the component $C_0$ is the limit of embedded pseudoholomorphic spheres, it can be shown to be a (possibly trivial) branched cover of an embedded punctured pseudoholomorphic sphere. Given that the underlying simply covered sphere is a cylinder, Wendl's automatic transversality theorem in \cite{Auttrans} shows that it is transversely cut out.
\end{enumerate}
\end{rem}

\subsection{A further analysis of the obtained pseudoholomorphic building}
\label{sec:further}
Using the assumption that $D_\infty$ is $J^\infty$-holomorphic (see Section \ref{sec:neckstretch}), there is a unique punctured $J^\infty$-holomorphic sphere $A_\infty$ in the top level of the above building which intersects the line $D_\infty$ at infinity transversely in precisely one point. Here we rely on positivity of intersection \cite{LocalCurve} together with $[D_\infty]\bullet[D_\infty]=1$. We proceed to establish certain properties of the punctured sphere $A_\infty$ passing through the line at infinity that will be needed for the proof.
\begin{prop}
\label{prop:area}
The punctured $J^\infty$-holomorphic sphere $A_\infty$ intersecting the line at infinity is a simply covered plane (i.e.~a one-punctured sphere) of symplectic area $0<\int_{A_\infty}\omega_{\OP{FS},{1+\delta}} \le \pi \delta$.
\end{prop}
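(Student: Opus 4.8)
The plan is to evaluate the area by Stokes' theorem. On $\CP^2\setminus D_\infty=B^4_{1+\delta}$ the form is exact, $\omega_{\OP{FS},1+\delta}=d\lambda_0$ with $\lambda_0=\tfrac12\sum_i(x_i\,dy_i-y_i\,dx_i)$, and $D_\infty$ is $J^\infty$-holomorphic with $[D_\infty]\bullet[D_\infty]=1$. I would excise a small disc around the transverse point $A_\infty\cap D_\infty$ and a half-neighbourhood of each puncture, and integrate $\lambda_0$ over the resulting boundary: the meridian encircling $D_\infty$ contributes the full degree-one area $\pi(1+\delta)^2$, while each negative puncture contributes $-\oint_\gamma\lambda_0$, the Liouville action of its asymptotic Reeb orbit. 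Thus $\int_{A_\infty}\omega_{\OP{FS},1+\delta}=\pi(1+\delta)^2-\sum_\gamma\oint_\gamma\lambda_0$, and the content of the statement is that this difference is positive, at most $\pi\delta$, and realised by a single puncture.

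Two structural facts come first. By positivity of intersection \cite{LocalCurve}, $A_\infty$ is \emph{simply covered}: a $k$-fold branched cover would meet the $J^\infty$-holomorphic divisor $D_\infty$ with multiplicity at least $k$, contradicting $A_\infty\bullet D_\infty=1$. Next, by Proposition \ref{prop:three} the planes $A_1,A_2$ reach $A_\infty$ only through $C_0$, so in the genus-zero building, whose dual graph is therefore a tree, the sub-building carrying $D_\infty$ is glued to the remainder along a \emph{single} Reeb orbit $\gamma_0$ (a second gluing orbit would create a cycle and force positive arithmetic genus). Applying the Stokes computation to this entire sub-building, whose only external negative end is $\gamma_0$ and whose internal orbits cancel in pairs, expresses its total area as $\pi(1+\delta)^2-\oint_{\gamma_0}\lambda_0$.

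To bound this difference I would localise near $D_\infty$, where $J^\infty=i$ is integrable, so that the outermost piece of the sub-building is a genuine holomorphic curve meeting $D_\infty$ once and hence, by the uniqueness in Gromov's Theorem \ref{thm:gromov}, a piece of the complex line $\ell_0$ pinned by its $1$-jet at $A_\infty\cap D_\infty$. This identifies $\gamma_0$ with the orbit into which $\ell_0$ runs upon entering the neck, and $\oint_{\gamma_0}\lambda_0$ is then computed explicitly from $\ell_0$; the difference $\pi(1+\delta)^2-\oint_{\gamma_0}\lambda_0$ becomes the Fubini--Study area trapped in a collar of $D_\infty$ of radial width $O(\delta)$, which, with the neck radius $\epsilon$ calibrated to $\delta$ as fixed at the start of the section, is at most $\pi\delta$. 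Positivity is immediate, as $A_\infty$ is non-constant. Finally, an area bounded by $\pi\delta$, and hence smaller than the energy of any non-trivial punctured sphere occurring in the building once $\delta$ is small, leaves no room for the sub-building to break further, so it is the single simply-covered plane $A_\infty$ asymptotic to $\gamma_0$.

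The step I expect to be most delicate is precisely this last rigidity: that the component through $D_\infty$ has exactly one puncture and does not break, since the identification of $\gamma_0$ through $\ell_0$ and the exclusion of breaking are mildly interdependent. This is where the genuinely four-dimensional techniques stressed in the introduction are essential — positivity of intersection \cite{LocalCurve} to keep the limit embedded and to license the genus/tree argument, and Wendl's automatic transversality \cite{Auttrans} to control the dimensions of the strata of multiply-punctured competitors. Extracting the sharp constant $\pi\delta$, rather than a crude $O(\delta)$, is then a matter of pairing the Stokes identity with the explicit Fubini--Study area of the collar and the calibrated ratio of $\epsilon$ to $\delta$.
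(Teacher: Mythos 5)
Your opening reductions are fine and agree with the paper: simple coveredness of $A_\infty$ does follow from positivity of intersection \cite{LocalCurve} and $A_\infty \bullet D_\infty = 1$, and your closing step (once an area bound of $\pi\delta$ is known, any additional component of the sub-building through $D_\infty$ would be a top-level piece disjoint from $D_\infty$ of area at least $\pi/2$, which is impossible) is essentially the paper's argument that $A_\infty$ has a single puncture. The fatal problem is the middle step, where you derive the bound $\int_{A_\infty}\omega_{\OP{FS},1+\delta}\le\pi\delta$. First, Gromov's Theorem~\ref{thm:gromov} is a uniqueness statement for \emph{closed} degree-one spheres with respect to a global tame almost complex structure; it says nothing about a local piece of a punctured curve that happens to be $i$-holomorphic in the region near $D_\infty$, so you cannot conclude that $A_\infty$ coincides with a complex line there. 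Second, and more decisively, the neck is stretched along $\Phi(S^*_{\epsilon/2}\T^2)$, the unit cotangent bundle of $L$ sitting deep inside $B^4$ near the Lagrangian -- not near $D_\infty$. The asymptotic orbit $\gamma_0$ is (a cover of) a closed geodesic of $L$, and its action $\oint_{\gamma_0}\lambda_0$ is governed by the Liouville class of $L$, i.e.\ by the symplectic areas of discs with boundary on $L$; it is not computable from any collar of $D_\infty$, and the area of $A_\infty$ is in no sense concentrated near the line at infinity.

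There is a structural reason your route cannot be repaired as stated: your derivation of the $\pi\delta$ bound never uses the extremality of $L$, yet the bound is false without it. For a non-extremal torus (say, a tiny Lagrangian torus near the origin), the stretching produces a building whose component through $D_\infty$ has area close to the full line area. The paper's proof obtains the estimate exactly from extremality: Proposition~\ref{prop:three} supplies two planes $A_1,A_2$ disjoint from $D_\infty$, each compactifying to a disc with boundary on $L$ of positive area, hence of area at least $A_{\OP{min}}(L)=\pi/2$; since every top-level component has positive area and the total top-level area equals the area of the line, at most $\pi\delta$ is left for $A_\infty$. In your Stokes identity this is precisely the statement that $\oint_{\gamma_0}\lambda_0$ is bounded below by the total area of the \emph{complementary} sub-building containing $A_1$, $A_2$ and $C_0$ -- so the correct lower bound on the action comes from extremality applied on the $L$-side, not from a computation near $D_\infty$. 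Note also that your final no-breaking step quietly needs the same input (the energy threshold for top-level components disjoint from $D_\infty$ is $\pi/2$ only because $L$ is extremal and, as in Proposition~\ref{prop:nobubbling}, because classes in $H_2(D^4,L)$ can be represented by discs), so extremality should appear explicitly there as well.
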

\begin{proof}
The area estimate follows from Proposition \ref{prop:three}, since $\int_{A_i}\omega_{\OP{FS},{1+\delta}}\ge \pi/2$, $i=1,2$, by assumption, and since the total symplectic area of the components in the top level is equal to $\pi(1+\delta)$ by topological reasons. To that end, we must use the fact that every punctured sphere in the top level has positive symplectic area.

We continue by arguing that $A_\infty$ has a single puncture. Otherwise, using Lemma \ref{lem:noplanes}, in addition to the two planes $A_1,A_2 \subset \CP^2 \setminus L$ established by Proposition \ref{prop:three}, $A_\infty$ must be connected to an additional $J^\infty$-holomorphic plane $A \subset \CP^2 \setminus L$ disjoint from $D_\infty$. By the assumption that $L$ is extremal, the symplectic area of this plane satisfies $\int_A\omega_{\OP{FS},1+\delta} \ge \pi/2$. Using an area argument as above, the existence of these three planes in $\CP^2 \setminus L$ disjoint from $D_\infty$ now leads to a contradiction.
\end{proof}
Similarly, we also obtain
\begin{prop}
\label{prop:nobubbling}
The connected component $\mathcal{M}$ containing $A_\infty$ of the moduli space of pseudoholomorphic planes cannot bubble given that $\pi \delta < \pi/2$. It thus follows by \cite{CompSFT} that this component is compact.
\end{prop}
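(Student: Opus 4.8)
The plan is to assume that $\mathcal{M}$ is not compact and to contradict the area bound $\int_{A_\infty}\omega_{\OP{FS},1+\delta}\le \pi\delta<\pi/2$ of Proposition~\ref{prop:area}. First I would record that the symplectic area is locally constant along $\mathcal{M}$: as one deforms a plane within this component the asymptotic Reeb orbit varies inside a single Morse--Bott family $\Gamma_\eta$, all of whose orbits have the same action, while the relative homology class is locally constant. Hence every plane in $\mathcal{M}$ has area $\le \pi\delta$. If $\mathcal{M}$ failed to be compact, a sequence of its members would, by the SFT compactness theorem \cite{CompSFT}, converge to a nontrivial pseudoholomorphic building of total area $\le\pi\delta$; in particular each of its top-level components in $\CP^2\setminus L$ would have $\omega_{\OP{FS},1+\delta}$-area $\le\pi\delta$.

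Next I would dispose of interior sphere bubbles. A sphere bubble is a nonconstant closed $J^\infty$-holomorphic sphere in $\CP^2\setminus L\subset\CP^2$ (the symplectizations carry exact forms and admit no closed spheres), so it has degree $\ge 1$ and area at least that of a line, $\pi(1+\delta)^2>\pi\delta$, which is impossible. Therefore any nontrivial limit building must arise from genuine neck breaking, i.e.\ it must contain at least one level in the symplectisation $\R\times S^*_1L$.

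The heart of the argument is an analysis of the symplectisation components, constrained by two facts. By Lemma~\ref{lem:noplanes} none of them is a plane. Moreover, the homology classes of the asymptotic orbits must balance in $H_1(S^*_1L)$, which forces the $H_1(L)$-classes of the positive punctures to sum to those of the negative punctures; since the action of an orbit equals the length of the corresponding flat geodesic, the triangle inequality shows that any symplectisation component with at most one positive puncture has nonpositive area and is therefore trivial. Consequently every nontrivial symplectisation component has at least two positive punctures, hence has tree-degree $\ge 2$ in the building and cannot be a leaf. Thus the only leaves of the building are single-puncture top-level components, i.e.\ genuine planes in $\CP^2\setminus L$. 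A nontrivial building is a tree with $\ge 2$ vertices and hence has $\ge 2$ leaves, while by positivity of intersection \cite{LocalCurve} together with $[D_\infty]\bullet[D_\infty]=1$ at most one component of the building meets $D_\infty$. Therefore at least one leaf is a plane in $\CP^2\setminus L$ disjoint from $D_\infty$, and extremality of $L$ forces its area to be $\ge A_{\OP{min}}(L)=c_{\OP{Lag}}(D^4,\omega_0)=\pi/2$, exactly as in the proofs of Propositions~\ref{prop:three} and~\ref{prop:area}. This contradicts the bound $\le\pi\delta<\pi/2$, so no degeneration can occur and compactness follows from \cite{CompSFT}.

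The main obstacle I expect is the step asserting that every nontrivial symplectisation component has at least two positive punctures: this is where I must combine Lemma~\ref{lem:noplanes} with the homology-and-action computation (the triangle inequality for geodesic lengths on the flat torus) to rule out trivial cylinders and all one-positive-puncture pieces, and then translate it into the tree-leaf statement that manufactures a plane disjoint from $D_\infty$ of area at least $\pi/2$. Everything else is bookkeeping with the area bound $\pi\delta<\pi/2$.
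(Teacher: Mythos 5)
Your proposal follows the paper's overall strategy (constancy of area over $\mathcal{M}$, Lemma \ref{lem:noplanes}, positivity of intersection against $D_\infty$, and a final contradiction with extremality), and several of your refinements are correct: the exclusion of closed sphere bubbles by a degree count, and the action--homology computation showing that a symplectisation component with at most one positive puncture has zero $d\alpha$-energy and is hence a cover of a trivial cylinder. The genuine gap is the structural claim that ``the only leaves of the building are single-puncture top-level components'', and hence the conclusion that some leaf is a \emph{plane} in $\CP^2\setminus L$ disjoint from $D_\infty$. Two things are overlooked. First, trivial cylinders are legitimate components of an SFT building (the stability condition only requires one non-trivial component per \emph{level}), and a trivial cylinder whose negative end is the building's unique external puncture has tree-degree one; your positive-puncture count excludes only \emph{non-trivial} symplectisation leaves. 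Second, as a consequence, the external asymptotic can hang below a \emph{multiply-punctured} top-level component via a chain of trivial cylinders. A concrete configuration consistent with everything you establish: a plane $P$ through $D_\infty$; below it a symplectisation component $V$ with two positive ends of geodesic classes $(1,-1)$ and $(-1,1)$ and no negative ends (such $J_{\OP{cyl}}$-holomorphic cylinders exist, e.g.\ $\{p_1+p_2=a,\ \theta_1+\theta_2=b\}$ with $a\neq 0$, whose tangent planes are $J_{\OP{cyl}}$-invariant and which avoids the zero section); attached to the second end of $V$ a top-level \emph{cylinder} $C$ disjoint from $D_\infty$; and a trivial cylinder $T$ carrying $C$'s remaining puncture down to the external orbit. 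Its leaves are $P$ and $T$; it contains no plane disjoint from $D_\infty$, so your argument produces no contradiction for it.

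The paper closes exactly this loophole with one further observation, which makes the plane requirement unnecessary: \emph{any} top-level component $A$ disjoint from $D_\infty$, plane or not, has $0<\int_A\omega_{\OP{FS},1+\delta}\le\pi\delta$, its compactification defines a class in $H_2(D^4,L)$ of the same symplectic area, and since $D^4$ is simply connected this class can be represented by a \emph{disc}; hence $A_{\OP{min}}(L)\le\pi\delta<\pi/2$, contradicting extremality (in the configuration above this applies to the cylinder $C$). Granting this, one only needs that every non-trivial limit building has at least two top-level components, which does follow from your leaf count once trivial-cylinder leaves are accounted for: there is at most one of them (it must carry the unique external puncture), and in the remaining two-leaf (path) case your own action argument forces every symplectisation level to consist entirely of trivial cylinders, so the building collapses and is not a genuine degeneration. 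Either add the simple-connectedness observation or carry out this case analysis; as written, the proof does not go through.
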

\begin{proof}
By Lemma \ref{lem:noplanes}, a hypothetical limit pseudoholomorphic building of a sequence of planes in the moduli space must consist of at least two $J^\infty$-holomorphic components in the top level $\CP^2 \setminus L$. Positivity of intersection moreover implies that exactly one of these components intersects the line at infinity $D_\infty$.

Assuming that the broken pseudoholomorphic building exists, we pick one such component $A \subset \CP^2 \setminus L$ which is disjoint from $D_\infty$. Since every component in $\CP^2 \setminus L$ has positive symplectic area, with total sum equal to at most $\pi\delta$ by Proposition \ref{prop:area}, it follows that $0<\int_A\omega_{\OP{FS},1+\delta} < \pi \delta$. This however contradicts the assumption that $L$ is extremal. Namely, the compactification of $A$ produces a class in $H_2(D^4,L)$ of the same symplectic area. Observe that even in the case when $A$ itself is not a plane, since $D^4$ is simply connected, this class can be represented by a disc.
\end{proof}

For the embeddedness properties shown in the following proposition, we need to make heavy use of positivity of intersection. The proof does hence not generalise to higher dimensions.
\begin{prop}
\label{prop:embed}
All planes in the connected component $\mathcal{M}$ containing $A_\infty$ of the moduli space of pseudoholomorphic planes are embedded.
\end{prop}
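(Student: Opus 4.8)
The plan is to prove embeddedness of every plane $A \in \mathcal{M}$ by exploiting the three four-dimensional miracles that the introduction advertises: positivity of intersection, the adjunction inequality, and automatic transversality. First I would recall that $A_\infty$ is the limit of embedded $J^\tau$-holomorphic spheres of degree one. Since embeddedness is encoded by the adjunction formula $2\delta(A) = [A]\bullet[A] - c_1(A) + \chi(A)$ (where $\delta(A)\ge 0$ counts double points and cusps with multiplicity, and equals zero precisely when $A$ is embedded), the strategy is to argue that the self-intersection and Chern numbers forced by the neck-stretching limit leave no room for a positive contribution from $\delta$. Because the $J^\tau$-spheres are embedded lines with $[\ell]\bullet[\ell]=1$, and $A_\infty$ is the sole top-level component meeting $D_\infty$ transversely in one point (Proposition \ref{prop:area}), the relevant homological quantities are pinned down by the degree-one constraint.

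Second, I would pass from $A_\infty$ to an arbitrary plane $A$ in the connected component $\mathcal{M}$. By Proposition \ref{prop:nobubbling} this component is compact, and by Wendl's automatic transversality theorem \cite{Auttrans}, which applies precisely in this four-dimensional punctured setting, each such plane is regular and $\mathcal{M}$ is a smooth manifold in which embeddedness is both an open and a closed condition. Openness is standard once a single embedded representative is known: nearby curves in a transversely cut-out family of an embedded curve remain embedded. For closedness I would invoke positivity of intersection in the form of the \emph{persistence of embeddedness} along a sequence: if a sequence of embedded planes converged to a non-embedded (but non-broken, by Proposition \ref{prop:nobubbling}) limit, the limit would acquire an isolated singular or double point which by positivity of intersection \cite{LocalCurve} would force a nearby self-intersection in the sequence, contradicting embeddedness. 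Thus $\mathcal{M}$ is connected with both the embedded and non-embedded loci open, so once $A_\infty$ is shown embedded the whole component consists of embedded planes.

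Third, the base case — that $A_\infty$ itself is embedded — I expect to be the main obstacle, and I would reduce it to an adjunction count at the level of the compactified building. The idea is that $A_\infty$ together with $A_1$, $A_2$, and $C_0$ glues topologically to the embedded degree-one sphere $\ell_{\tau_i}$ in the limit, so the adjunction invariant of the building is inherited from that of $\ell_{\tau_i}$, which is zero. Since $\delta$ is additive over the components of a building plus their mutual intersections (all of which are non-negative by positivity of intersection, and the asymptotic intersection contributions at matched Reeb orbits are non-negative by the relative adjunction formula of \cite{Auttrans}), vanishing of the total forces $\delta(A_\infty)=0$, i.e.~$A_\infty$ is embedded. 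The delicate point here is controlling the asymptotic/relative intersection numbers at the punctures, where the Morse-Bott degeneracy of the orbit families $\Gamma_\eta$ enters; I would handle this using the writhe and relative intersection bounds from Wendl's framework, noting that the Robbin-Salamon index computation $\OP{RS}(\Gamma_i)=1/2$ already carried out in the proof of Proposition \ref{prop:three} feeds directly into these formulae. Once the asymptotics are shown to contribute non-negatively and the homological self-intersection is fixed by the degree-one geometry, the adjunction inequality closes the argument.
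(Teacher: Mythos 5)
Your overall architecture --- prove $A_\infty$ is embedded, then propagate embeddedness over the connected, compact component $\mathcal{M}$ --- matches the paper's, and your use of positivity of intersection for the closedness direction is correct. The genuine gap is in your openness step. The claim that ``nearby curves in a transversely cut-out family of an embedded curve remain embedded'' is a closed-curve fact; for \emph{punctured} curves it fails in general, and this failure is exactly the delicate point of this proposition. A punctured curve can carry hidden self-intersections ``at infinity'': when the end is asymptotic to a multiply covered Reeb orbit (which can happen here, cf.\ the Chekanov-torus example \cite[Corollary A.2]{PuncturedHolomorphic}), the strands of the end braid around the orbit, and under an arbitrarily small deformation these asymptotic contributions can convert into honest interior double points. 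So the non-embedded locus need not be closed, your open/closed dichotomy does not get off the ground, and some asymptotic input is required. The paper supplies it via Siefring's relative asymptotic analysis: by \cite[Corollary 2.6]{Siefring:Relative} together with compactness of $\mathcal{M}$ (Proposition \ref{prop:nobubbling}), all planes of $\mathcal{M}$ are embedded outside one \emph{fixed} compact subset. Self-intersection points therefore can neither escape to nor emerge from the puncture along a path in $\mathcal{M}$, so McDuff's non-negative local self-intersection count \cite{LocalCurve} is constant on $\mathcal{M}$; it vanishes at $A_\infty$, hence vanishes identically. Your argument needs this ingredient (or an equivalent verification that the asymptotic self-intersection contribution vanishes) to be complete.

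For the base case, your building-level adjunction plan is viable in principle but far heavier than necessary, and its hardest ingredient --- non-negativity and control of the asymptotic writhe/intersection terms at the Morse--Bott orbit families --- is precisely the part you leave vague. The paper's route is more elementary: since the degree-one spheres $\ell_{\tau_i}$ are embedded, positivity of intersection together with the nature of SFT convergence \cite{CompactnessPunctured} forces every component of the limit building to be a (possibly trivial) branched cover of an \emph{embedded} punctured sphere, because an isolated singular point would contribute positively to the local self-intersection index and would persist in the approximating embedded spheres. Then $A_\infty$ is simply covered because it meets $D_\infty$ with intersection number one (Proposition \ref{prop:area}), so the branched cover is trivial and $A_\infty$ is embedded --- with no adjunction bookkeeping at all.
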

\begin{proof}
Positivity of intersection \cite{LocalCurve} together with the nature of convergence \cite{CompactnessPunctured} implies that every component in the limit building is a (possibly trivial) branched cover of an \emph{embedded} punctured pseudoholomorphic sphere. Here we have used the fact that the spheres in the sequence constructed by Gromov's Theorem \ref{thm:gromov} all are embedded (this again follows from positivity of intersection), together with the fact that a discrete intersection point of a singularity would contribute positively to the local self-intersection index defined by D. McDuff in \cite{LocalCurve}.

Since $A_\infty$ is intersects $D_\infty$ with intersection number one, it is simply covered and hence embedded. The properties of the local self-intersection index in \cite{LocalCurve} moreover implies that all planes in the component $\mathcal{M}$ must be embedded. To that end, observe that these planes necessarily are embedded outside of a fixed compact subset by \cite[Corollary 2.6]{Siefring:Relative} together with Proposition \ref{prop:nobubbling}.
\end{proof}

\begin{prop}
\label{prop:index}
After a perturbation of $J^\infty$ in $U\setminus D_\infty \subset \CP^2$, where $U$ is an arbitrarily small neighbourhood of the line at infinity $D_\infty$, we may assume the following. The $J^\infty$-holomorphic plane $A_\infty$ is of odd and positive Fredholm index. (This Fredholm index denotes the expected dimension of the component of the moduli space $\mathcal{M} \ni A_\infty$ of \emph{unparametrised} pseudoholomorphic planes for which the asymptotic Reeb orbit is allowed to vary.)
\end{prop}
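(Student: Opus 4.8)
The plan is to first use the perturbation allowed in the statement to make $A_\infty$ regular, then to read off the expected dimension of $\mathcal{M}$ from the Morse-Bott index formula, and finally to deduce positivity from the non-emptiness of $\mathcal{M}$ together with the parity of the index.

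First I would perturb $J^\infty$ inside $U \setminus D_\infty$ in order to arrange that $A_\infty$ is transversely cut out. By Proposition~\ref{prop:embed} the plane $A_\infty$ is embedded, and by Proposition~\ref{prop:area} (together with the discussion preceding it) it meets the $J^\infty$-holomorphic divisor $D_\infty$ transversely in exactly one point; consequently $A_\infty$ has injective points lying in $U \setminus D_\infty$. Since this region is disjoint from the loci where $J^\infty$ is already constrained---the cylindrical end near $L$, the fixed structure on $U_0$, and $D_\infty$ itself, which stays $J^\infty$-holomorphic because we never perturb on $D_\infty$---the standard Sard--Smale argument (see \cite[Section 3.4]{JHolCurves}) lets us make $A_\infty$ regular while leaving the components $A_1,A_2,C_0$, all disjoint from $U$, untouched. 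The uniqueness of the component meeting $D_\infty$ is a consequence of positivity of intersection and $[D_\infty]\bullet[D_\infty]=1$, and is therefore robust under this small perturbation, so the building persists.

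Next I would compute the index. The single puncture of $A_\infty$ is a negative (concave) puncture asymptotic to a closed geodesic of the flat torus, lying in a one-dimensional Morse-Bott family $\Gamma$ with $\OP{RS}(\Gamma)=1/2$, as recorded in Proposition~\ref{prop:three}. For a once-punctured sphere in the four-dimensional cobordism $\CP^2 \setminus L$, the expected dimension of the moduli space of unparametrised planes with the asymptotic orbit allowed to vary is
\[ \OP{ind}(A_\infty) = (n-3)\chi(\dot{\Sigma}) + 2c_1(A_\infty) + \left(-\OP{RS}(\Gamma)+\tfrac12\dim\Gamma\right),\]
where $n=2$, $\chi(\dot{\Sigma})=1$ for the plane, and $c_1(A_\infty)$ denotes the relative first Chern number taken with respect to the canonical trivialisation along $\Gamma$. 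The last summand is the contribution of a negative Morse-Bott puncture, whose form is fixed by requiring consistency with the all-positive-puncture convention $\OP{ind}(C)=(\OP{RS}(\Gamma_1)+1/2)+(\OP{RS}(\Gamma_2)+1/2)$ used for the cylinder $C\subset T^*L$ in Proposition~\ref{prop:three}. Substituting $\OP{RS}(\Gamma)=1/2$ and $\dim\Gamma=1$ gives
\[ \OP{ind}(A_\infty) = -1 + 2c_1(A_\infty) + 0 = 2c_1(A_\infty) - 1.\]
The structural point is that the only odd contribution is the Euler characteristic term $(n-3)\chi(\dot{\Sigma})=-1$ of the plane, while $2c_1(A_\infty)$ is even and the Morse-Bott boundary term is the integer $0$; hence $\OP{ind}(A_\infty)$ is odd irrespective of the precise value of the Chern number.

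Finally, positivity follows at once: after the perturbation above $A_\infty$ is regular, so near $A_\infty$ the component $\mathcal{M}$ is a smooth manifold whose dimension equals $\OP{ind}(A_\infty)$; being non-empty it has dimension $\ge 0$, and being odd it therefore has dimension $\ge 1>0$. Equivalently, one may verify $c_1(A_\infty)\ge 1$ directly from the single positive intersection of $A_\infty$ with $D_\infty$. The step I expect to be the main obstacle is the bookkeeping: pinning down the precise sign and Morse-Bott normalisation of the negative-puncture term so that it is genuinely an integer and the parity argument is airtight, while simultaneously checking that the localised perturbation achieving regularity is compatible with every constraint already imposed on $J^\infty$ in Section~\ref{sec:neckstretch} and keeps $D_\infty$ holomorphic.
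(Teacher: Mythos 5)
Your main argument is correct, and its skeleton coincides with the paper's proof: the same index formula \cite[Formula (3)]{PuncturedHolomorphic} specialised to a once-punctured sphere, transversality achieved by perturbing $J^\infty$ in $U\setminus D_\infty$ (legitimate because $A_\infty$ is simply covered and passes through that region), non-negativity of the index from regularity plus non-emptiness, and positivity from oddness. Where you genuinely differ is the parity step. The paper evaluates the two trivialisation-dependent terms in the trivialisation induced from $T\C^2$, so that the orbit contribution becomes $-(\OP{RS}(\Gamma)-1/2)=\mu_L(-\gamma)$ by \cite[Lemma 2.1]{PuncturedHolomorphic}, which is even since $L$ is orientable. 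You instead evaluate both terms in the trivialisation coming from $\det{}_\C T(T^*L)$, the one in which Proposition \ref{prop:three} computes $\OP{RS}(\Gamma)=1/2$; then the orbit term is $0$ and parity comes from the integrality of the relative first Chern number. This is a valid alternative which bypasses the appeal to Viterbo's lemma, but note that it only works if $\OP{RS}(\Gamma)$ and $c_1(A_\infty)$ are computed in the \emph{same} trivialisation: the flat cotangent trivialisation does qualify, because $J^\infty$ agrees with the push-forward of $J_{\OP{cyl}}$ near $L$, so it induces a genuine trivialisation of $\det{}_\C T\CP^2$ along the asymptotic orbit. The existence of this global trivialisation is where orientability of $\T^2$ enters your version, playing exactly the role that evenness of the Maslov class plays in the paper's.

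One aside in your write-up is wrong as stated: that ``one may verify $c_1(A_\infty)\ge 1$ directly from the single positive intersection of $A_\infty$ with $D_\infty$.'' In the cotangent trivialisation (the one in which your orbit term vanishes) the relative Chern number is \emph{not} localised at $D_\infty$; it differs from the intersection contribution $3\,[A_\infty]\bullet[D_\infty]=3$ by half the Maslov index of the asymptotic geodesic, which has no a priori sign. In the $T\C^2$-trivialisation one does get $c_1(A_\infty)=3$ from positivity of intersection, but there the orbit term is $\mu_L(-\gamma)$ rather than $0$, so positivity again does not follow ``at once.'' This does not damage your proof, since your primary derivation of positivity (regularity gives index $\ge 0$, oddness upgrades this to $\ge 1$) is exactly the paper's; simply delete the ``equivalently'' remark.
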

\begin{proof}
Let $\Gamma$ be the one-dimensional family of periodic Reeb orbits containing the asymptotic of $A_\infty$, and let $\gamma$ be the oriented geodesic on $L$ corresponding to this asymptotic orbit. (The orientation of $\gamma$ is induced by the flow of the Reeb vector field.) Note that the oriented boundary of the compactification of $A_\infty$ to a disc in $X$ is equal to the geodesic $-\gamma$ on $L$, i.e.~the geodesic endowed with the opposite orientation.

Using \cite[Formula (3)]{PuncturedHolomorphic} we can express the sought Fredholm index as
\[\OP{index}(A_\infty)=-1+2c_1(A_\infty)-(\OP{RS}(\Gamma)-1/2),\]
where $\OP{RS}(\Gamma)$ denotes the Robbin-Salamon index defined in \cite[Remark 5.4]{MaslovIndex}.

Using an appropriate trivialisation of the complex determinant bundle of $T\CP^2$ in order to compute the above relative first Chern class and Robbin-Salamon index, we obtain the identity
\[-(\OP{RS}(\Gamma)-1/2)=\mu_L(-\gamma)\]
by \cite[Lemma 2.1]{PuncturedHolomorphic} (this was originally proven in \cite{NewObstruction}). Here $\mu_L(\gamma)$ denotes the Maslov index of the closed geodesic $\gamma$ on $L \subset (\C^2,\omega_0)$ computed using the canonical trivialisation of $T\C^2$. In particular, since $\mu_L(\gamma)$ is even by the orientability of $L$, we conclude that $\OP{index}(A_\infty)$ must be \emph{odd}.

Since $A_\infty$ is simply covered by Proposition \ref{prop:area}, a standard transversality argument implies the following (see \cite[Section 3.4]{JHolCurves}). After a perturbation of $J^\infty$ supported in $U \setminus D_\infty$, where $U \subset \CP^2$ is an arbitrarily small neighbourhood of $D_\infty$, the solution $A_\infty$ may be assumed to be transversely cut out. Its index can thus be assumed to be \emph{non-negative}, since it is equal to the dimension of the moduli space containing it.
\end{proof}

\begin{rem}
Utilising the formula for how the Fredholm indices of the components of a building compare to the Chern number of the building, one can even show that the Fredholm index of $A_\infty$ is equal to one; see \cite{LagIsoTori}. This fact will however not be needed.
\end{rem}

\begin{rem}
Even if the pseudoholomorphic plane $A_\infty$ is simply covered, it is possible that its unique puncture is asymptotic to a multiply covered periodic Reeb orbit. The example of the Chekanov torus in \cite[Corollary A.2]{PuncturedHolomorphic} shows that this case indeed can occur.
\end{rem}

\subsection{A null-homology of $L$ by a chain foliated by planes}
Consider the embedded plane $A_\infty \subset \CP^2 \setminus L$ obtained above which is asymptotic to a Reeb orbit in $\Gamma$, and let $\mathcal{M}$ denote the connected component containing $A_\infty$ of the moduli space of \emph{unparametrised} $J^\infty$-holomorphic planes. For a fixed Reeb orbit $\gamma \in \Gamma$ we let $\mathcal{M}_\gamma \subset \mathcal{M}$ denote the subspace of those planes having $\gamma$ as its asymptotic orbit.

By Propositions \ref{prop:embed} and \ref{prop:index}, the equality in \cite[Remark 1.2]{Auttrans} is satisfied for the planes in both $\mathcal{M}$ and $\mathcal{M}_\gamma$. The automatic transversality result \cite[Theorem 1]{Auttrans} thus applies both of the moduli spaces $\mathcal{M}$ and $\mathcal{M}_\gamma$. These two results combine to show that the component $\mathcal{M}$ is transversely cut out and, moreover, that the asymptotic evaluation map
\[\OP{ev}_\infty \colon \mathcal{M} \to \Gamma \simeq S^1\]
is a submersion. Since $\mathcal{M}$ is compact by Proposition \ref{prop:nobubbling}, we have shown that this moduli space is the total space of a smooth locally trivial fibre bundle over $S^1$ with compact fibre diffeomorphic to $\OP{ev}^{-1}_\infty(\gamma)=\mathcal{M}_\gamma$.

Let $\varphi \in \OP{Diff}(\mathcal{M}_\gamma)$ be the clutching function for this fibre bundle. Consider the pull-back bundle $\widetilde{\mathcal{M}} \to S^1$ induced by the $n$-fold covering $S^1 \to S^1$, which thus can be constructed using the clutching function $\varphi^n$. Since $|\pi_0(\mathcal{M}_\gamma)| < \infty$ holds by compactness, taking $k := |\pi_0(\mathcal{M}_\gamma)| \ge 1$ it follows that $\widetilde{\mathcal{M}}$ admits a section $\sigma_{n} \colon S^1 \to \widetilde{\mathcal{M}}$ whenever $k!|n$.

For any contractible open subset $U \subset \mathcal{M}$, one can define evaluation maps
\[ \OP{ev} \colon U \times \C \to \CP^2 \setminus L\]
for the planes in $U$. These evaluation maps obviously depend on the choice of a holomorphic parametrisation of the domain, i.e.~the plane $\C$. Recall that any two such parametrisations of $\C$ are related by a biholomorphism $z \mapsto az+b$ for some $a\in \C^*$ and $b \in \C$.

The constant $b \in \C$ together with the modulus $|a|>0$ of $a$ both constitute contractible choices when fixing a parametrisation. We are left with the choice of $\arg a \in S^1 \subset \C^*$. Let $m \ge 1$ denote the multiplicity of the Reeb orbits in $\Gamma$, and fix a smoothly depending choice of starting point for each Reeb orbit in $\Gamma$. We can make $\arg a$ uniquely determined up to multiplication by a power of $e^{i2\pi /m} \in \C^*$ by requiring the parametrisation to be asymptotic to the chosen starting point of its asymptotic Reeb orbit as $\mathfrak{R}(z)=x \to +\infty$ along the positive real line. (This uses the asymptotic convergence to a Reeb orbit satisfied by a finite energy plane; see e.g.~\cite{CompSFT}.)

By the above we can fix smoothly depending choices of parametrisations of the $S^1$-family of planes $\sigma_{k!m!}$. Namely, the (homotopy classes of) parametrisations of these planes form a bundle over $S^1$ with fibre $\Z/\Z m$ and clutching function $\hat{\varphi}^{m!}=\id_{\Z / \Z m}$. 

In conclusion, it is possible to construct a globally defined evaluation map for the $S^1$-family of planes $\sigma_{k!m!} \colon S^1 \to \widetilde{\mathcal{M}}$. Using the property of asymptotic convergence to Reeb orbits satisfied by finite energy planes (again see e.g.~\cite{CompSFT}), the latter evaluation map can be compactified to produce a continuous map
\[ (S^1 \times D^2,S^1\times S^1) \to (\CP^2,L), \]
whose restriction to the boundary is a map $S^1 \times S^1 \to L$ of degree $k m l >0$ for some $l>0$. For the latter property we use the fact established above that the asymptotic evaluation map is a submersion, together with the fact that each plane is asymptotic to a closed geodesic on $L$ for the flat metric.

Note that the image of each $\{\theta\} \times D^2$ under the above map is $J^\infty$-holomorphic away from the boundary. Let us choose $\delta>0$ sufficiently small in order for $\hbar > \pi\delta>0$ to be satisfied, where $\hbar$ is the constant given by \ref{M}. Since the linking number of $L$ and the fibre $S_0 \subset D^4 \setminus L$ constructed in Section \ref{sec:proof} is non-zero inside $\CP^2$, namely there is a disc with boundary equal to $S_0$ which intersects $L$ in a single transverse point $p \in L$, the above chain must intersect $S_0$. However, the monotonicity property \ref{M} holds (recall that $J^\infty=J_{U_0}$ in the neighbourhood $U_0 \supset S_0$; see Section \ref{sec:neckstretch}), thus leading to the sought contradiction with Proposition \ref{prop:area}. See Figure \ref{fig:disc} for an illustration.

\begin{figure}[htp]
\begin{center}
\vspace{3mm}
\labellist
\pinlabel $\color{red}S_0$ at 69 87
\pinlabel $\color{blue}D$ at 78 66
\pinlabel $L$ at 106 75
\pinlabel $\color{blue}\infty$ at 153 70
\pinlabel $(\CP^2,\omega_{\OP{FS},1+\delta})$ at 166 21
\pinlabel $B^4$ at 74 20
\pinlabel $P$ at 50 40
\endlabellist
\includegraphics{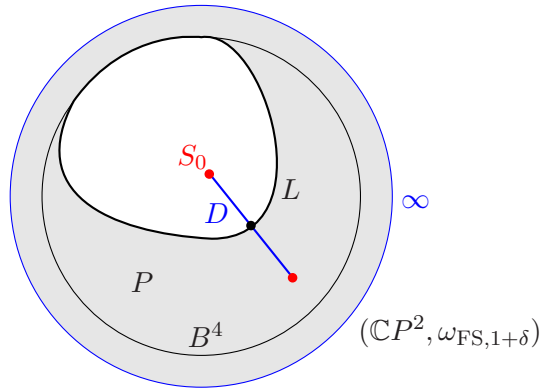}
\caption{The disc $D$ with boundary equal to $S_0$ intersects $L$ transversely in a single point. A null-homology of $L$ must thus intersect $S_0 \subset \CP^2 \setminus L$. In particular, one of the planes $P \in \mathcal{M}$ must intersect $S_0$.}
\label{fig:disc}
\end{center}
\end{figure}

\def\cprime{$'$} \def\cprime{$'$} \def\cprime{$'$} \def\cprime{$'$}


\def\cprime{$'$} \def\cprime{$'$} \def\cprime{$'$} \def\cprime{$'$}
\begin{thebibliography}{99}

\bibitem{FibresNormaux}
M.~Audin.
\newblock Fibr\'es normaux d'immersions en dimension double, points doubles
  d'immersions lagragiennes et plongements totalement r\'eels.
\newblock {\em Comment. Math. Helv.}, 63(4):593--623, 1988.

\bibitem{BourgeoisBott}
F.~Bourgeois.
\newblock A {M}orse-{B}ott approach to contact homology.
\newblock In {\em Symplectic and contact topology: interactions and
  perspectives ({T}oronto, {ON}/{M}ontreal, {QC}, 2001)}, volume~35 of {\em
  Fields Inst. Commun.}, pages 55--77. Amer. Math. Soc., Providence, RI, 2003.

\bibitem{CompSFT}
F.~Bourgeois, Y.~Eliashberg, H.~Hofer, K.~Wysocki, and E.~Zehnder.
\newblock Compactness results in symplectic field theory.
\newblock {\em Geom. Topol.}, 7:799--888, 2003.

\bibitem{Buhovsky:MaslovClass}
L.~Buhovsky.
\newblock The {M}aslov class of {L}agrangian tori and quantum products in
  {F}loer cohomology.
\newblock {\em J. Topol. Anal.}, 2(1):57--75, 2010.

\bibitem{CieFra}
K.~Cieliebak and U.~A. Frauenfelder.
\newblock A {F}loer homology for exact contact embeddings.
\newblock {\em Pacific J. Math.}, 239(2):251--316, 2009.

\bibitem{PuncturedHolomorphic}
K.~Cieliebak and K.~Mohnke.
\newblock Punctured holomorphic curves and {L}agrangian embeddings.
\newblock {\em ArXiv e-prints}.
\newblock Preprint (2014), available at
  \texttt{http://arxiv.org/abs/1411.1870}.

\bibitem{CompactnessPunctured}
K.~Cieliebak and K.~Mohnke.
\newblock Compactness for punctured holomorphic curves.
\newblock {\em J. Symplectic Geom.}, 3(4):589--654, 2005.
\newblock Conference on Symplectic Topology.

\bibitem{FloerHomUniv}
M.~Damian.
\newblock Floer homology on the universal cover, {A}udin's conjecture and other
  constraints on {L}agrangian submanifolds.
\newblock {\em Comment. Math. Helv.}, 87:443--463, 2012.

\bibitem{LagIsoTori}
G.~Dimitroglou~Rizell, E.~Goodman, and A.~Ivrii.
\newblock Lagrangian isotopy of tori in {$S^2 \times S^2$} and {$\mathbb{C}P^2$}.
\newblock {\em ArXiv e-prints}.
\newblock Preprint (2016), available at
  \texttt{http://arxiv.org/abs/1602.08821}.

\bibitem{Unlinking}
G.~Dimitroglou~Rizell and J.~D. Evans.
\newblock Unlinking and unknottedness of monotone {L}agrangian submanifolds.
\newblock {\em Geom. Topol.}, 18(2):997--1034, 2014.

\bibitem{IntroSFT}
Y.~Eliashberg, A.~Givental, and H.~Hofer.
\newblock Introduction to symplectic field theory.
\newblock {\em Geom. Funct. Anal.}, (Special Volume, Part II):560--673, 2000.
\newblock GAFA 2000 (Tel Aviv, 1999).

\bibitem{FloerAnomalyI}
K.~Fukaya, Y.-G. Oh, H.~Ohta, and K.~Ono.
\newblock {\em Lagrangian intersection {F}loer theory: anomaly and obstruction.
  {P}art {I}}, volume~46 of {\em AMS/IP Studies in Advanced Mathematics}.
\newblock American Mathematical Society, Providence, RI, 2009.

\bibitem{Gromov}
M.~Gromov.
\newblock Pseudoholomorphic curves in symplectic manifolds.
\newblock {\em Invent. Math.}, 82(2):307--347, 1985.

\bibitem{Polydisks}
R.~Hind and S.~Lisi.
\newblock Symplectic embeddings of polydisks.
\newblock {\em Selecta Mathematica}, pages 1--22, 2014.

\bibitem{LocalCurve}
D.~McDuff.
\newblock The local behaviour of holomorphic curves in almost complex
  {$4$}-manifolds.
\newblock {\em J. Differential Geom.}, 34(1):143--164, 1991.

\bibitem{SympTop}
D.~McDuff and D.~Salamon.
\newblock {\em Introduction to symplectic topology}.
\newblock Oxford Mathematical Monographs. The Clarendon Press Oxford University
  Press, New York, second edition, 1998.

\bibitem{JHolCurves}
D.~McDuff and D.~Salamon.
\newblock {\em {$J$}-holomorphic curves and symplectic topology}, volume~52 of
  {\em American Mathematical Society Colloquium Publications}.
\newblock American Mathematical Society, Providence, RI, second edition, 2012.

\bibitem{Oh:spectral}
Y.-G. Oh.
\newblock Floer cohomology, spectral sequences, and the {M}aslov class of
  {L}agrangian embeddings.
\newblock {\em Internat. Math. Res. Notices}, (7):305--346, 1996.

\bibitem{Polterovich:MaslovClass}
L.~V. Polterovich.
\newblock The {M}aslov class of the {L}agrange surfaces and {G}romov's
  pseudo-holomorphic curves.
\newblock {\em Trans. Amer. Math. Soc.}, 325(1):241--248, 1991.

\bibitem{MaslovIndex}
J.~Robbin and D.~Salamon.
\newblock The {M}aslov index for paths.
\newblock {\em Topology}, 32(4):827--844, 1993.

\bibitem{Siefring:Relative}
R.~Siefring.
\newblock Relative asymptotic behavior of pseudoholomorphic half-cylinders.
\newblock {\em Comm. Pure Appl. Math.}, 61(12):1631--1684, 2008.

\bibitem{SomeProp}
J.-C. Sikorav.
\newblock Some properties of holomorphic curves in almost complex manifolds.
\newblock In {\em Holomorphic curves in symplectic geometry}, volume 117 of
  {\em Progr. Math.}, pages 165--189. Birkh\"auser, Basel, 1994.

\bibitem{NewObstruction}
C.~Viterbo.
\newblock A new obstruction to embedding {L}agrangian tori.
\newblock {\em Invent. Math.}, 100(2):301--320, 1990.

\bibitem{Auttrans}
C.~Wendl.
\newblock Automatic transversality and orbifolds of punctured holomorphic
  curves in dimension four.
\newblock {\em Comment. Math. Helv.}, 85(2):347--407, 2010.

\end{thebibliography}

\end{document}